\newtheorem{definition}{Definition}[section]
\newtheorem{lemma}[definition]{Lemma}
\newtheorem{remark}[definition]{Remark}
\newtheorem{example}[definition]{Example}
\newtheorem{proposition}[definition]{Proposition}
\newtheorem{corollary}[definition]{Corollary}
\newtheorem{theorem}[definition]{Theorem}
\def\reg{\operatorname{reg}}
\begin{document}

% Redefine "plain" pagestyle
\makeatletter      % `@' is now a normal "letter' for LaTeX
\renewcommand{\ps@plain}{%
    \renewcommand{\@oddhead}{\textrm{ON INVARIANT OF THE REGULARITY INDEX OF FAT POINTS}\hfil\textrm{\thepage}}%
     \renewcommand{\@evenhead}{\@oddhead}%
     \renewcommand{\@oddfoot}{}% empty footer
     \renewcommand{\@evenfoot}{\@oddfoot}}
\makeatother     % `@' is restored as a "non-letter" character

\title{ON INVARIANT OF THE REGULARITY INDEX OF \\ FAT POINTS}         % Enter your title between curly braces
\author{Phan Van Thien \\ Department of
Mathematics,\\  University of Education, Hue University, Vietnam \\ E-mail: pvthien@hueuni.edu.vn\\}
% Enter your name between curly braces
\date{}          % Enter your date or \today between curly braces
\maketitle
% Set to use the "plain" pagestyle
\pagestyle{plain}

\begin{abstract}\noindent  We prove  invariant of the regularity index of fat points under changes of the linear subspace containing the support of the fat points (Theorem \ref{theorem35}). Then we show that Segre's bound is attained by  any set of $s$ non-degenerate equimultiple  fat points in $\mathbb P^n$, $s\le n+3$ (Theorem \ref{theorem43}). We also give an example showing that there always exists a set of $n+4$ non-degenerate equimultiple fat points in $\mathbb P^n$ such that Segre's bound is not attained (Example \ref{ex44}).
\end{abstract}

\noindent {\it Key words and phrases.} Regularity index; Fat points.

\par \noindent {2010 Mathematics Subject Classification.} Primary 14C20;
Secondary 13D40.

\par \section{Introduction} \ \ \ \ \ In this paper, we denote by $\mathbb P^n:=\mathbb P^n_K$ the $n$-dimensional  projective space over an algebraically closed $K$ of arbitrary characteristic, and denote by $R:=K[X_0, \ldots, X_n]$ the polynomial ring in variables $X_0, \ldots, X_n$ over $K$. If $P_1,\ldots , P_s$ are distinct points in $\mathbb P^n$; we denote by $\wp_1, \ldots, \wp_r$ the  defining homogeneous prime ideals of $P_1, \ldots, P_s$ in $R$, respectively. 

Let $P_1, \ldots, P_s$ be distinct
points in $\mathbb P^n$, and let $m_1, \ldots, m_s$ be positive integers. Then the set of all homogeneous polynomilas that vanish at $P_i$ to order $m_i$; for $i=1, \ldots, s$; is the homogeneous ideal $I:=\wp^{m_1}_1 \cap \cdots \cap
\wp^{m_s}_s$. We call the zero-scheme defined by $I$ to be fat points (or a set of  fat points) in $\mathbb P^n$, and we denote it by  $$Z:=m_1P_1+\cdots+m_sP_s.$$ 
The integer $m_i$ is called the multiplicity of $P_i$; $i=1, \ldots, s$. If $m_1=\cdots=m_s=2$, then fat points $Z=2P_1+\cdots +2P_s$ is said
to be double points. If  $m_1=\cdots=m_s=m$, then fat points $Z=mP_1+\cdots +mP_s$ is said
to be equimultiple. We call the points $P_i$ to be  the points of support of $Z$. If the support of $Z$ span $\mathbb P^n$; then $P_1, \ldots, P_s$ is called non-degenerate in $\mathbb P^n$; and we also call $Z$ a set of non-degenerate fat points in $\mathbb P^n$.

The homogeneous coordinate ring $R/I$ of $Z$ is a graded ring, $R/I=\underset{t
\ge 0}{\oplus} (R/I)_t$. 
We call the function $$H_{R/I}(t):=\dim_K (R/I)_t$$
to be the Hilbert function of $Z$ in $\mathbb P^n$. 
Since $$\dim_K I_t =\dim_K R_t - \dim_K (R/I)_t = \binom{t+n}{n}-H_{R/I}(t),$$
we know the dimension of linear space of all homogeneous polynomials of degree $t$ vanishing at $P_i$ to order $m_i$; $i=1, \ldots, s$; if we know the Hilbert function $H_{R/I}(t)$. But it is a fairly difficult problem to determine the Hilbert function of fat points in $\mathbb P^n$.

The ring $R/I$ has the multiplicity $$e(R/I)
:=\underset{i=1}{\overset{s}{\sum}} \binom{m_i+n-1}{n}.$$ The Hilbert function $H_{R/I}(t)$
strictly increases until it reaches the multiplicity $e(R/I)$, at which it stabilizes. We call 
 the least integer $t$ such that $H_{R/I}(t)=e(R/I)$ to be the regularity index of $Z$ in $\mathbb P^n$, and we denote it by $\reg(Z)$.  It is well known that
$\reg(Z)=\reg(R/I)$, the Castelnuovo-Mumford regularity of the ring $R/I$.

If $t\ge \reg(Z)$, we have
 $$\dim_K I_t = \binom{t+n}{n} - \underset{i=1}{\overset{s}{\sum}} \binom{m_i+n-1}{n}.$$
 So, it is interesting to determine $\reg(Z)$ or, even less, an upper bound for it. 
 
\par

From now on if $a$ is a rational number, we denote by $[a]$ it's integer part.

For generic fat points $Z=m_1P_1+\cdots+m_sP_s$ in $\mathbb P^2$ with $m_1 \ge
\cdots \ge m_s$, Segre \cite{Seg} showed that
$$\reg(Z) \le \max\left\{m_1+m_2-1, \left[\frac{m_1+\cdots+m_s}2\right]\right\}.$$
It was conjectured by Trung (see \cite{Th2}) and, independently, by Farabbi and Lorenzini (see \cite{FL}) that 
$$\reg(R/I) \le \max \{ T_j(Z) |\  j=1,\ldots,n \},$$
where
$$T_j(Z) := \max\left\{\left[\frac{\sum_{l=1}^q m_{i_l}+ j- 2}{j}\right] |\
P_{i_1}, \ldots , P_{i_q} \text{ on a }j\text{-dimensional linear subspace}\right\}.$$
The number $$T(Z):=\max \{ T_j(Z) |\  j=1,\ldots,n \}$$
is called the Segre's bound because it generalizes Segre's upper bound. There were many different results for proving the Segre's bound. In 2016, Ballico et al. \cite{BDP} succesfully proved the Segre's bound for $n+3$ non-degenerate  fat points in $\mathbb P^n$. Recently, Nagel and Trok (see \cite[Theorem 5.3]{NT}) have succefully proved the Segre's bound for  arbitrary fat points $Z$ in $\mathbb P^n$. 

The problem to exactly determine $\reg(R/I)$ is more fairly difficult. So far,
there are only a few results about computing $\reg(R/I)$.
\par

For abitrary fat points $Z=m_1P_1+\cdots+m_sP_s$ in $\mathbb P^n$, Davis and
Geramita \cite[Corollary 2.3]{DG} proved that
$$\reg(R/I) = m_1+\cdots+m_s-1$$  if and
only if points $P_1, \ldots, P_s$ lie on a line in $\mathbb P^n$. \par

The points $P_1, \ldots, P_s$ in $\mathbb P^n$ is said to be in linearly general position if no $j+2$ of the points $P_1, \ldots, P_s$ are on any $j$-dimensional linear subspace for $j <n$, then we also call the fat points $Z=m_1P_1+\cdots+m_sP_s$  to be in general position in $\mathbb P^n$. A rational normal curve in $\mathbb P^n$ to be a curve of degree $n$ that may be given parametrically as the image of the map
\begin{align*} \mathbb P^1 &\to \mathbb P^n.\\
(s, t) &\mapsto (s^n, s^{n-1}t, \ldots, t^n)
\end{align*}
For fat points $Z=m_1P_1+\cdots+m_sP_s$ in $\mathbb P^n$ with $m_1\ge \cdots \ge m_s$,
Catalisano et al. \cite{CTV} showed  formulas to compute $\reg(Z)$ in the
following two cases: \par If $s\ge 2$ and $P_1, \ldots, P_s$ are on a rational
normal curve in $\mathbb P^n$ \cite[Proposition 7]{CTV}, then
$$\reg(R/I) = \max\left\{m_1+m_2-1, \left[(\sum_{i=1}^s
m_i+n-2)/n\right]\right\}.$$ \par If $n\ge 3$, $2\le s \le n+2$, $2\le m_1\ge
\cdots \ge m_s$ and $P_1, \ldots, P_s$ are in general position in $\mathbb P^n$, then
$$\reg(R/I) = m_1+m_2-1.$$ \par

For fat points $Z=m_1P_1+\cdots+m_{s+2}P_{s+2}$ in $\mathbb
P^n$ with $P_1, \ldots, P_{s+2}$ not in a linear $(s-1)$-space in $\mathbb P^n$,
$s \le n$, Thien \cite[Theorem 3.4]{Th4} showed 
$$\reg(R/I) = T(Z).$$

For equimutiple fat points $Z=mP_1+\cdots+mP_s$ in $\mathbb P^n$ with $m\ne 2$; $P_1, \ldots, P_s$ are not on a linear $(r-1)$-space and $s\le r+3$, Thien and Sinh \cite[Theorem 4.5]{TS} showed
$$\reg(R/I)= T(Z).$$

Assume that $\alpha$ is a $r$-dimensional linear subspace of $\mathbb P^n$ containing the points $P_1, \ldots, P_s$. We may consider the $r$-dimensional linear subspace $\alpha$ as a projective space $\mathbb P^r$ containing the points $P_{1\alpha}:=P_1, \ldots,
P_{s\alpha}:=P_s$, and we consider $Z_{\alpha}:=m_1P_{1\alpha}+\cdots+m_sP_{s\alpha}$ as a zero-scheme in $\mathbb
P^r$. What is the relation between $\reg(Z)$ and $\reg(Z_{\alpha})$?

\par In this paper we prove that $\reg(Z)=\reg(Z_{\alpha})$ (Theorem \ref{theorem35}). Then we calculate regularity index of a set of $n+3$ non-degenerate double points in $\mathbb P^n$ (Proposition  \ref{prop42}) and show that Segre's bound is attained by  any set of $s$ non-degenerate equimultiple  fat points in $\mathbb P^n$, $s\le n+3$ (Theorem \ref{theorem43}). We also show that there always exists a set of $n+4$ non-degenerate equimultiple fat points in $\mathbb P^n$ such that Segre's bound is not attained (Corollary \ref{cor45}).

\section{Preliminaries} \ \ \ \

We use the following lemmas which have been
proved yet. The first lemma allows us to
compute the regularity index by induction.

\begin{lemma}\label{lem21} \cite[Lemma 1]{CTV} Let $P_1,\ldots , P_r, P$ be
distinct points in $\mathbb P^n$ and $\wp$ be the defining ideals in $R$ corresponding to the point $P$. If
$m_1,\ldots , m_r, a$ are positive integers, $J := \wp^{m_1}_1\cap
\cdots\cap  \wp^{m_r}_r$, and $I = J \cap \wp^a$, then $$\reg(R/I) =
\max\left\{a-1, \reg(R/J), \reg(R/(J+\wp^a)) \right\}.$$ \end{lemma}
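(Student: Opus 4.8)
The plan is to reduce everything to a single numerical identity coming from a Mayer--Vietoris sequence, and then read off both inequalities from that identity. Since $I = J \cap \wp^a$ and $J + \wp^a$ is their sum, the natural maps $x \mapsto (x,x)$ and $(y,z)\mapsto y-z$ fit into a short exact sequence of graded $R$-modules
\[ 0 \longrightarrow R/I \longrightarrow R/J \oplus R/\wp^a \longrightarrow R/(J+\wp^a) \longrightarrow 0. \]
First I would check exactness degree by degree: injectivity is immediate from $I = J\cap\wp^a$, and a preimage in the middle term is produced by writing an element killed by the second map as a sum of something in $J$ and something in $\wp^a$. Taking dimensions in each degree $t$ then yields the identity
\[ H_{R/I}(t) = H_{R/J}(t) + H_{R/\wp^a}(t) - H_{R/(J+\wp^a)}(t). \]

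Next I would collect three auxiliary facts. Since $\wp^a$ defines the single fat point $aP$, whose support lies trivially on a line, the Davis--Geramita description gives $\reg(R/\wp^a) = a-1$. Because $P$ is distinct from $P_1,\dots,P_r$, the scheme defined by $I$ is $m_1P_1 + \cdots + m_rP_r + aP$, so multiplicities add: $e(R/I) = e(R/J) + e(R/\wp^a)$. Finally, $R/(J+\wp^a)$ is a cyclic module with empty support, hence its Hilbert function is eventually $0$ and, being cyclic, stays $0$ once it vanishes; I take $\reg(R/(J+\wp^a))$ to be the least degree beyond which it is identically $0$. Writing $b := \max\{a-1,\ \reg(R/J),\ \reg(R/(J+\wp^a))\}$, the upper bound $\reg(R/I)\le b$ is then immediate: for every $t \ge b$ the three terms on the right of the identity equal $e(R/J)$, $e(R/\wp^a)$ and $0$ respectively, so $H_{R/I}(t) = e(R/I)$.

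For the reverse inequality I would show $\reg(R/I) \ge q$ for each of the three candidates $q \in \{a-1,\ \reg(R/J),\ \reg(R/(J+\wp^a))\}$ by one uniform argument: evaluate the identity at $t = q-1$ and observe that in each case exactly one right-hand term is strictly sub-maximal there, while the others are bounded above by their stable values. Concretely, at $t = a-2$ the term $H_{R/\wp^a}$ is strictly below $e(R/\wp^a)$; at $t = \reg(R/J)-1$ the term $H_{R/J}$ is strictly below $e(R/J)$; and at $t = \reg(R/(J+\wp^a))-1$ the subtracted term $H_{R/(J+\wp^a)}$ is strictly positive. In all three cases the identity forces $H_{R/I}(t) < e(R/J) + e(R/\wp^a) = e(R/I)$, so the Hilbert function has not yet stabilized and $\reg(R/I) \ge q$. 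Combining the three gives $\reg(R/I) \ge b$, hence equality. The main obstacle I anticipate is bookkeeping rather than conceptual: one must pin down the convention for $\reg$ of the Artinian quotient $R/(J+\wp^a)$ (there is an off-by-one between the Castelnuovo--Mumford regularity and the ``least degree of stabilization'' used here) so that the $+1$ appearing in the generic short-exact-sequence estimate for regularity is absorbed correctly, and one must dispatch the degenerate small cases (e.g.\ $a=1$) where the relevant degree $q-1$ is negative and the corresponding lower bound is vacuous.
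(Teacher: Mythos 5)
The paper does not prove this statement at all: it is quoted verbatim as \cite[Lemma 1]{CTV}, so there is no internal proof to compare against. Your argument is correct and is essentially the proof in the cited source: the short exact sequence $0 \to R/I \to R/J \oplus R/\wp^a \to R/(J+\wp^a) \to 0$, the resulting Hilbert-function identity, the fact that $\reg(R/\wp^a)=a-1$, and the convention (stated in the paper right after the lemma) that $\reg$ of the Artinian quotient is the least $t$ with $[R/(J+\wp^a)]_t=0$, which resolves exactly the off-by-one worry you flagged and makes both your upper- and lower-bound readings of the identity go through.
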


Note that $R/(J+\wp^a)$ is a zero-dimensional graded ring algebra, the regularity index $\reg(R/(J+\wp^a))$ is the least integer such that $[R/(J+\wp^a))]_t=0$.
To estimate  $\reg(R/(J+\wp^a))$ we shall  use the following lemma. \par
\begin{lemma}\label{lem22} \cite[Lemma 3]{CTV}
 Let $P_1,\ldots , P_r, P$ be distinct
points in $\mathbb P^n$ and $\wp$ be the defining ideal in $R$ corresponding to the point $P$. If $m_1,\ldots , m_r, a$ are positive integers, $J =
\wp^{m_1}_1\cap \cdots\cap  \wp^{m_r}_r$ and $\wp = (X_1,\ldots , X_n)$, then
$$\reg(R/(J+\wp^a)) \le b$$ if and only if $X^{b-i}_0M \in J+\wp^{i+1}$ for
every monomial $M$ of degree $i$ in $X_1,\ldots , X_n$; $i = 0,\ldots , a-1$.
\end{lemma}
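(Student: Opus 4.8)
The plan is to reinterpret the inequality $\reg(R/(J+\wp^a)) \le b$ as a single membership statement and then verify it one monomial at a time. Because the points $P_1,\dots,P_r$ are distinct from $P$, the zero set of $J+\wp^a$ in $\mathbb P^n$ is empty, so $R/(J+\wp^a)$ is standard graded and Artinian, as already noted after Lemma \ref{lem21}; moreover the degrees in which it vanishes are upward closed, since $R_t\subseteq J+\wp^a$ gives $R_{t+1}=R_1R_t\subseteq J+\wp^a$. Hence my first step is to record that $\reg(R/(J+\wp^a))\le b$ is equivalent to $(R/(J+\wp^a))_b=0$, that is, to $R_b\subseteq J+\wp^a$.

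Next I would reduce $R_b\subseteq J+\wp^a$ to a statement about the monomials $X_0^{b-i}M$, where $M$ runs over monomials of degree $i$ in $X_1,\dots,X_n$. Every degree-$b$ monomial has this shape for a unique $i$ with $0\le i\le b$, and whenever $i\ge a$ the factor $M$ already lies in $\wp^a$; so only the exponents $i=0,\dots,a-1$ impose a genuine condition, namely $X_0^{b-i}M\in J+\wp^a$. The forward implication is then immediate: since $i+1\le a$ gives $\wp^a\subseteq\wp^{i+1}$, and hence $J+\wp^a\subseteq J+\wp^{i+1}$, the membership $X_0^{b-i}M\in J+\wp^a$ forces $X_0^{b-i}M\in J+\wp^{i+1}$.

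The substance of the proof is the converse, which I would establish by downward induction on $i$. For $i=a-1$ the hypothesis $X_0^{b-i}M\in J+\wp^{i+1}$ is literally $X_0^{b-i}M\in J+\wp^a$, the base case. For $i<a-1$ I would write the homogeneous degree-$b$ element $X_0^{b-i}M$ as $f+g$ with $f\in J$ and $g\in\wp^{i+1}$, then expand $g$ as a $K$-linear combination of degree-$b$ monomials $X_0^{b-i_j}M_j$ with each $i_j\ge i+1$. Each such monomial either lies in $\wp^a$ outright (when $i_j\ge a$) or, since $i_j>i$, lies in $J+\wp^a$ by the induction hypothesis (when $i+1\le i_j\le a-1$); thus $g\in J+\wp^a$ and therefore $X_0^{b-i}M=f+g\in J+\wp^a$.

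The hard part is exactly this inductive step: the hypothesis only guarantees membership in the coarser ideal $J+\wp^{i+1}$, so I must bootstrap the exponent of $\wp$ from $i+1$ up to $a$ by feeding the higher-degree monomials appearing in the remainder $g$ back into the hypothesis. I expect the only delicate bookkeeping to be confirming that the exponent range $i=0,\dots,a-1$ is correct, which relies on $\reg(R/(J+\wp^a))\ge a-1$ to guarantee $b\ge a-1$ so that the monomials $X_0^{b-i}M$ are actually defined.
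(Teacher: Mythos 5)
Your proof is correct. The paper itself gives no proof of Lemma \ref{lem22} (it is quoted verbatim from \cite[Lemma 3]{CTV}), and your argument --- reducing $\reg(R/(J+\wp^a)) \le b$ to the vanishing of the degree-$b$ graded piece of the Artinian quotient $R/(J+\wp^a)$, checking this monomial by monomial, and bootstrapping membership in $J+\wp^{i+1}$ up to $J+\wp^a$ by descending induction on $i$ --- is essentially the original Catalisano--Trung--Valla proof of that lemma.
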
  \par

\begin{lemma}\label{lem23} \cite[Corollary 2]{CTV}
 Let $s\ge 2$; $P_1, \ldots, P_s$ be distinct points in $\mathbb P^n$ and $m_1 \ge \cdots \ge m_s$ be positive integers. If $I=\wp_1^{m_1}\cap \cdots \cap  \wp_s^{m_s}$, then 
 $$\reg(R/I)\ge m_1+m_2-1.$$
\end{lemma}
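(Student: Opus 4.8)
The plan is to strip off the points $P_3,\dots,P_s$ one at a time by means of Lemma~\ref{lem21}, reducing the desired inequality to the two points $P_1,P_2$ that carry the two largest multiplicities, and then to settle this two-point case directly.

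For the reduction, write $I=J\cap\wp_s^{m_s}$ with $J=\wp_1^{m_1}\cap\cdots\cap\wp_{s-1}^{m_{s-1}}$ and apply Lemma~\ref{lem21} with $\wp=\wp_s$ and $a=m_s$. Since
$$\reg(R/I)=\max\{m_s-1,\ \reg(R/J),\ \reg(R/(J+\wp_s^{m_s}))\}\ge\reg(R/J),$$
discarding the point $P_s$ can only decrease the regularity index. Iterating, each time discarding one of the remaining points, gives $\reg(R/I)\ge\reg(R/(\wp_1^{m_1}\cap\wp_2^{m_2}))$; indeed the same argument applied to an arbitrary pair yields $\reg(R/I)\ge\reg(R/(\wp_i^{m_i}\cap\wp_j^{m_j}))$ for all $i<j$, and since $m_1\ge\cdots\ge m_s$ the best bound comes from the pair $(1,2)$.

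It remains to prove the base case $\reg(R/(\wp_1^{m_1}\cap\wp_2^{m_2}))=m_1+m_2-1$, which is where the genuine work lies. The quickest route is to invoke the Davis--Geramita criterion recalled in the introduction: two points always lie on a line, so the scheme $m_1P_1+m_2P_2$ has regularity index exactly $m_1+m_2-1$. For a self-contained argument from the lemmas at hand, apply Lemma~\ref{lem21} once more to obtain
$$\reg(R/(\wp_1^{m_1}\cap\wp_2^{m_2}))=\max\{m_2-1,\ m_1-1,\ \reg(R/(\wp_1^{m_1}+\wp_2^{m_2}))\},$$
and evaluate the last term with Lemma~\ref{lem22}. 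Choosing coordinates so that $P_2=[1:0:\cdots:0]$ and $P_1=[0:1:0:\cdots:0]$, that is $\wp_2=(X_1,\dots,X_n)$ and $\wp_1=(X_0,X_2,\dots,X_n)$, one tests the membership $X_0^{b-i}M\in\wp_1^{m_1}+\wp_2^{i+1}$ for monomials $M$ of degree $i$ in $X_1,\dots,X_n$. The binding case is $M=X_1^{m_2-1}$ with $i=m_2-1$: here $X_0^{b-i}M$ lies in $\wp_1^{m_1}+\wp_2^{m_2}$ precisely when $b\ge m_1+m_2-1$. Hence $\reg(R/(\wp_1^{m_1}+\wp_2^{m_2}))=m_1+m_2-1$, which dominates $m_1-1$ and $m_2-1$ and delivers the base case.

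The main obstacle is exactly this base-case computation. The reduction through Lemma~\ref{lem21} is purely formal, but showing that the ``generic'' piece $\reg(R/(\wp_1^{m_1}+\wp_2^{m_2}))$ equals $m_1+m_2-1$, rather than something smaller, requires exhibiting the single monomial obstruction $X_0^{m_1-1}X_1^{m_2-1}$ of degree $m_1+m_2-2$ and checking that no correction term drawn from $\wp_2^{i+1}$ can absorb it, since every such term has strictly larger degree in $X_1,\dots,X_n$. Invoking Davis--Geramita bypasses this bookkeeping, but in either approach the whole difficulty is concentrated in the two-point scheme.
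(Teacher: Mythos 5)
Your proof is correct, but note that the paper itself contains no proof of this statement: Lemma~\ref{lem23} is quoted verbatim from \cite[Corollary 2]{CTV} as a preliminary, so there is no in-paper argument to compare against; what you have written is essentially the standard argument behind the cited result. Two remarks on your route. First, the stripping step you build by iterating Lemma~\ref{lem21} is available wholesale inside the paper as Lemma~\ref{lem26} (monotonicity of $\reg$ under passing to a subscheme), so the reduction to the pair $P_1,P_2$ is a one-line citation rather than an induction. Second, your base-case computation is sound and can even be shortened: with $\wp_1=(X_0,X_2,\dots,X_n)$ and $\wp_2=(X_1,\dots,X_n)$ both monomial ideals, a monomial lies in $\wp_1^{m_1}+\wp_2^{i+1}$ if and only if it lies in one of the summands, and $X_0^{m_1-1}X_1^{m_2-1}$ has degree only $m_1-1$ in $X_0,X_2,\dots,X_n$ and only $m_2-1$ in $X_1,\dots,X_n$, so it lies in neither; by Lemma~\ref{lem22} this already gives $\reg\bigl(R/(\wp_1^{m_1}+\wp_2^{m_2})\bigr)\ge m_1+m_2-1$, and since the stated lemma is only a lower bound, neither the reverse inequality nor the appeal to Davis--Geramita is actually needed. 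Your claim that this test monomial is ``the binding case'' (i.e.\ that $b=m_1+m_2-1$ passes the membership test for all other monomials $M$) is true but is only required if one wants the exact value $m_1+m_2-1$ rather than the inequality.
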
  \par

\begin{lemma}\label{lem24} \cite[Corollary 8]{CTV}
Let $n\ge 3$, $2\le s \le n+2$, and let $P_1, \ldots, P_s$ be distinct points in general position in $\mathbb P^n$. If $2\le m_1\ge
\cdots \ge m_s>0$ are given integers and $I=\wp_1^{m_1} \cap \cdots \cap \wp_s^{m_s}$, then
$$\reg(R/I) = m_1+m_2-1.$$ \par

\end{lemma}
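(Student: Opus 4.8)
The content of the statement is the upper bound, since the lower bound $\reg(R/I)\ge m_1+m_2-1$ is immediate from Lemma~\ref{lem23} (which needs only $s\ge 2$). My plan is to deduce the upper bound from the Segre bound, which is available to us here: for arbitrary fat points Nagel and Trok \cite[Theorem 5.3]{NT} give $\reg(R/I)\le T(Z)$ (and for the non-degenerate configurations with $s\le n+2\le n+3$ already Ballico et al.\ \cite{BDP} would suffice). Thus it is enough to show that, under the general position hypothesis, $T(Z)=m_1+m_2-1$.

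To compute $T(Z)=\max_{1\le j\le n}T_j(Z)$ I would bound each $T_j(Z)$ separately, exploiting that the points are in linearly general position. For $j<n$ no $j+2$ of the points lie on a $j$-dimensional subspace, so any collection $P_{i_1},\ldots,P_{i_q}$ on such a subspace has $q\le j+1$, whence $\sum_l m_{i_l}\le m_1+\cdots+m_{j+1}\le m_1+jm_2$. Therefore
$$T_j(Z)\le\left[\frac{m_1+jm_2+j-2}{j}\right]\le m_1+m_2-1,$$
where the last inequality reduces to $m_1+jm_2+j-2\le j(m_1+m_2)-1$, i.e.\ to $(j-1)(m_1-1)\ge 0$, which holds because $m_1\ge 2$. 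Since any two points span a line, the value $m_1+m_2-1$ is actually attained at $j=1$, so $T_1(Z)=m_1+m_2-1$.

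The delicate case is $j=n$: here the only $n$-dimensional subspace is $\mathbb P^n$ itself, on which all $s\le n+2=j+2$ points lie, so the restriction $q\le j+1$ fails and one must instead use the bound $s\le n+2$ directly. Writing $\sum_{i=1}^s m_i\le m_1+(n+1)m_2$ (there are at most $n+1$ summands $m_2,\ldots,m_s$, each $\le m_2$), the desired estimate $T_n(Z)\le m_1+m_2-1$ reduces to $m_2+n-1\le (n-1)m_1$, and since $m_2\le m_1$ this follows from $n-1\le(n-2)m_1$, which holds precisely because $n\ge 3$ and $m_1\ge 2$. This is exactly the step where both hypotheses enter, and I expect it to be the main obstacle: it is the borderline configuration of $n+2$ points in $\mathbb P^n$ that forces the conditions $n\ge 3$ and $m_1\ge 2$, and it explains why the clean formula can fail outside this range. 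Combining the two cases gives $T(Z)=m_1+m_2-1$, and the theorem follows.

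A self-contained alternative, avoiding the deep Segre-bound theorems, is to induct on $s$ using Lemma~\ref{lem21}: peel off $P_s$ by setting $J=\wp_1^{m_1}\cap\cdots\cap\wp_{s-1}^{m_{s-1}}$, reducing $\reg(R/I)$ to $\max\{m_s-1,\ \reg(R/J),\ \reg(R/(J+\wp_s^{m_s}))\}$. The first two terms are $\le m_1+m_2-1$ (the second by induction, with base case $s=2$ being two collinear points), so everything comes down to $\reg(R/(J+\wp_s^{m_s}))\le m_1+m_2-1$, which via Lemma~\ref{lem22} (after placing $P_s=(1:0:\cdots:0)$, so $\wp_s=(X_1,\ldots,X_n)$) amounts to constructing, for each monomial $M$ of degree $i\le m_s-1$, a form $F\in J$ of degree $m_1+m_2-1$ with $X_0^{m_1+m_2-1-i}M-F\in(X_1,\ldots,X_n)^{i+1}$. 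Producing such interpolating forms as products of hyperplanes in general position is the genuinely hard part of this route, and is where the constraint $s\le n+2$ would be used.
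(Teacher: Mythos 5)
Your main argument is correct, but note that the paper itself gives no proof of this statement: Lemma~\ref{lem24} is quoted verbatim from \cite[Corollary 8]{CTV}, whose original proof is the inductive argument you only sketch as your ``self-contained alternative'' (peel off one point via Lemma~\ref{lem21}, then kill $\reg(R/(J+\wp_s^{m_s}))$ via Lemma~\ref{lem22} by constructing products of hyperplanes avoiding $P_s$). What you actually carry out is a genuinely different route: reduce everything to the combinatorial identity $T(Z)=m_1+m_2-1$ and invoke the Segre bound of Nagel--Trok \cite[Theorem 5.3]{NT}. That computation checks out, including the floor manipulations --- for $j<n$ linear general position gives $q\le j+1$, and the inequality reduces to $(j-1)(m_1-1)\ge 0$; for $j=n$ one gets $(n-1)m_1\ge m_2+n-1$, which is exactly where $n\ge 3$ and $m_1\ge 2$ enter --- so you correctly isolate the role of both hypotheses. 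The trade-off is that your route leans on a far heavier and much later theorem (2020) to establish a 1993 result, which is legitimate inside this paper's toolkit but would be circular-looking as a reconstruction of \cite{CTV}; the original proof buys self-containedness at the cost of the hyperplane-interpolation work you deferred. One small inaccuracy: your parenthetical claim that Ballico et al.\ \cite{BDP} ``would suffice'' fails when $s\le n$, since then points in linearly general position span only an $(s-1)$-dimensional subspace and are degenerate in $\mathbb P^n$, so Lemma~\ref{lem25} does not apply directly; one would first have to restrict to the span and invoke the invariance result (Theorem~\ref{theorem35}), or simply use \cite{NT} as you do in the main line of the argument.
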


\par\smallskip

In \cite{BDP}, Ballico et al. succesfully proved the Segre's  for non-degenerate $n+3$ fat points in $\mathbb P^n$.\par

\begin{lemma}\label{lem25} \cite[Theorem 2.1]{BDP} Let $Z=m_1P_1+\cdots+m_{n+3}P_{n+3}$ be a scheme of fat points supported on a non-degenerate set of distinct points in $\mathbb P^n$. Then $Z$ sastifies the Segre's bound, namely
$$\reg(Z) \le \max \{ T(Z, L): L \subseteq \mathbb P^n \},$$
where $L$ is a $j$-dimensional linear subspace of $\mathbb P^n$; $j=1, \ldots, n$; and
$$T(Z, L) := \max\left\{\left[\frac{\sum_{l=1}^q m_{i_l}+ j- 2}{j}\right] |\
P_{i_1}, \ldots , P_{i_q} \in L \right\}.$$
\end{lemma}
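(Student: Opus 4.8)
The plan is to prove the bound by a double induction on the dimension $n$ and on the total multiplicity $\sum_{i=1}^{n+3} m_i$, peeling off one fat point at a time by means of the reduction lemma (Lemma~\ref{lem21}). Relabelling so that $P_{n+3}$ is the point we remove, set $J=\wp_1^{m_1}\cap\cdots\cap\wp_{n+2}^{m_{n+2}}$ and $a=m_{n+3}$, so that $I=J\cap\wp_{n+3}^{a}$ and
$$\reg(R/I)=\max\{\,a-1,\ \reg(R/J),\ \reg(R/(J+\wp_{n+3}^{a}))\,\}.$$
It then suffices to bound each of the three terms by $T(Z):=\max\{T(Z,L):L\subseteq\mathbb P^n\}$, since this is exactly the quantity appearing on the right-hand side of the asserted inequality.

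The first two terms are the routine ones. The term $a-1$ is immediate: taking $L$ to be any line through $P_{n+3}$ gives $T(Z,L)\ge \big[(m_{n+3}+1-2)/1\big]=a-1$. For $\reg(R/J)$, the scheme attached to $J$ is supported on the $n+2$ points $P_1,\dots,P_{n+2}$, and every linear-subspace configuration contributing to its Segre bound is a subconfiguration of $Z$, so that bound is itself $\le T(Z)$. If these $n+2$ points are in linearly general position this is precisely Lemma~\ref{lem24}; if they lie on a line I would use the Davis--Geramita equality $\reg=m_1+\cdots+m_s-1$; and if they span a proper linear subspace $\alpha\cong\mathbb P^r$ with $r<n$, I would descend to $\mathbb P^r$ and apply the inductive hypothesis in lower dimension, using that the regularity index is unchanged when the ambient space is cut down to the span of the support.

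The real work is bounding $\reg(R/(J+\wp_{n+3}^{a}))$. Placing $P_{n+3}$ at the coordinate point with $\wp_{n+3}=(X_1,\dots,X_n)$, Lemma~\ref{lem22} reduces the inequality $\reg(R/(J+\wp_{n+3}^{a}))\le T(Z)$ to a membership test: for every $i$ with $0\le i\le a-1$ and every monomial $M$ of degree $i$ in $X_1,\dots,X_n$, the form $X_0^{\,T(Z)-i}M$ must lie in $J+\wp_{n+3}^{\,i+1}$. To verify this I would, for each such $M$, explicitly assemble a form $F$ of degree $T(Z)$ that vanishes to order $m_l$ at $P_l$ for every $l\le n+2$ and whose lowest-degree part at $P_{n+3}$ agrees with $X_0^{T(Z)-i}M$ modulo $\wp_{n+3}^{\,i+1}$. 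The building blocks are products of the linear forms defining the hyperplanes spanned by subsets of the $P_l$ together with the linear forms vanishing along the lines joining $P_{n+3}$ to the other points, with exponents chosen so that the prescribed multiplicity at each $P_l$ is met while the total degree stays equal to $T(Z)$.

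The main obstacle is exactly this construction, and it forces a finite but genuine case analysis on the configuration of the $n+3$ non-degenerate points: which triples are collinear, which subsets lie on a common hyperplane, and so on. For each such degeneration type one must check that enough linear forms, vanishing at the prescribed points to the prescribed orders, are available to build $F$ in degree \emph{exactly} $T(Z)$. That the count always works out is the combinatorial heart of the statement, and it is governed precisely by the defining maximum $T(Z,L)$ over all linear subspaces $L$, which is what makes those forms exist. Finally, the lower bound $\reg(R/I)\ge m_1+m_2-1$ from Lemma~\ref{lem23} shows that the target value $T(Z)$ cannot be lowered past the $j=1$ contribution, confirming that $T(Z)$ is the correct degree to aim at throughout the induction.
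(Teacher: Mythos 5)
This lemma is not proved in the paper at all: it is quoted as \cite[Theorem 2.1]{BDP} and used as a black box, so your proposal can only be measured against what a proof would actually require, and there it has a genuine gap. The gap is that the entire content of the theorem is concentrated in the one step you describe but do not carry out: showing that $X_0^{T(Z)-i}M \in J+\wp_{n+3}^{i+1}$ for every monomial $M$ of degree $i\le m_{n+3}-1$. You say you would ``assemble a form $F$ of degree $T(Z)$'' from linear forms through subsets of the points and that ``the count always works out'' after a finite case analysis --- but that count working out \emph{is} the theorem. This is exactly the Catalisano--Trung--Valla style of argument, and the reason it fills entire papers (\cite{CTV}, \cite{Th4}, \cite{TS}) even for restricted configurations is that for arbitrary non-degenerate points the naive product of powers of hyperplanes through the $P_l$ overshoots the target degree $T(Z)$; one must split the multiplicities among several hyperplanes chosen according to which subsets are collinear, coplanar, etc., and verifying that degree exactly $T(Z)$ always suffices for all $n+3$-point configurations is precisely the work of \cite{BDP}. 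Asserting it is a restatement of the claim, not a proof. (Your closing appeal to Lemma \ref{lem23} is also beside the point: a lower bound on $\reg(R/I)$ contributes nothing to an upper-bound statement.)

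There is a second, smaller but real, hole in your treatment of $\reg(R/J)$. Since the full set of $n+3$ points spans $\mathbb P^n$, the remaining $n+2$ points span either a hyperplane or all of $\mathbb P^n$. In the first case your dimension descent plus induction is fine, granted the invariance of the regularity index under passing to the span of the support (the inequality you need is Lemma \ref{lem31}, from \cite{BFL}). But in the second case the $n+2$ points may span $\mathbb P^n$ \emph{without} being in linearly general position (say, three of them collinear), and then Lemma \ref{lem24} --- the only tool you invoke for spanning configurations --- does not apply; neither does the Davis--Geramita line case. That situation is covered by Lemma \ref{lem27} (\cite[Theorem 3.4]{Th4} with $s=n$), which gives $\reg(R/J)$ equal to the Segre bound of the subscheme, hence at most $T(Z)$; your trichotomy silently omits it. This second gap is fixable with results already quoted in the paper, but the first one is not: it is the theorem itself.
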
\par

We see that the number $\max \{ T(Z, L): L \subseteq \mathbb P^n \}$ is the Segre's bound $T(Z)$ in the Trung's conjecture.

The following lemma was proved in \cite{Th4}, it also has been shown in \cite[Lemma 3.1]{NT}. \par

\begin{lemma}\label{lem26} \cite[Lemma 3.3]{Th4} Let $X=\{P_1,\ldots , P_s\}$ be
a set of distinct points in $\mathbb P^n$ and $m_1,\ldots , m_s$ be positive
integers. Put $I = \wp^{m_1}_1\cap \cdots\cap \wp^{m_s}_s$. If $Y$ is a subset of $X$ and $J =\underset{P_i \in Y}{\cap} \wp_i^{m_i}$, then $$\reg(R/J) \le \reg(R/I).$$
\end{lemma}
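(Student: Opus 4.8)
The plan is to prove the inequality by induction on the number of deleted points $|X\setminus Y|$, reducing the general statement to the case of removing a single fat point and then invoking the inductive formula of Lemma~\ref{lem21}. The underlying principle is simply that enlarging a scheme of fat points can only increase its regularity index, and Lemma~\ref{lem21} already packages exactly this fact.

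First I would dispose of the boundary cases. If $Y=X$ then $J=I$ and the inequality holds with equality, which also serves as the base case $|X\setminus Y|=0$; the degenerate case $Y=\emptyset$ gives $J=R$ and is vacuous, so I assume $Y\neq\emptyset$ throughout. For the inductive step, suppose $|X\setminus Y|=d\ge 1$ and that the conclusion is known whenever fewer than $d$ points are deleted. I choose a point $P_j\in X\setminus Y$, set $Y'=Y\cup\{P_j\}$ and $J'=\bigcap_{P_i\in Y'}\wp_i^{m_i}=J\cap\wp_j^{m_j}$. Relabelling the points of $Y$ as the $P_1,\ldots,P_r$ of Lemma~\ref{lem21} and taking $P=P_j$, $a=m_j$, that lemma yields
$$\reg(R/J') = \max\left\{\, m_j-1,\ \reg(R/J),\ \reg\bigl(R/(J+\wp_j^{m_j})\bigr) \,\right\}.$$
Since $\reg(R/J)$ is one of the three quantities inside the maximum, this immediately gives the one-step monotonicity $\reg(R/J)\le\reg(R/J')$. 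Because $|X\setminus Y'|=d-1$, the inductive hypothesis gives $\reg(R/J')\le\reg(R/I)$, and chaining the two inequalities completes the step.

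I do not expect a genuine obstacle here: the force of the argument is entirely carried by Lemma~\ref{lem21}, whose right-hand side is a maximum that explicitly contains the regularity index of the smaller scheme. The only points requiring care are the bookkeeping of the induction — ensuring at each stage that the point being adjoined lies in $X\setminus Y$ so that the hypotheses of Lemma~\ref{lem21} (distinct points and positive multiplicities) remain satisfied — together with the trivial boundary cases $Y=X$ and $Y=\emptyset$ noted above.
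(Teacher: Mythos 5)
Your argument is correct, and it is essentially the standard one. Note that the paper does not actually prove this statement: Lemma~\ref{lem26} is quoted from \cite[Lemma 3.3]{Th4} (and \cite[Lemma 3.1]{NT}) without proof, so there is no in-paper argument to compare against; but your induction is exactly the natural derivation within this paper's toolkit. The one-step monotonicity is precisely the observation that the right-hand side of Lemma~\ref{lem21},
$$\reg(R/J') = \max\left\{\, m_j-1,\ \reg(R/J),\ \reg\bigl(R/(J+\wp_j^{m_j})\bigr) \,\right\},$$
contains $\reg(R/J)$ as one of its entries, and your bookkeeping (induction on the number of deleted points, the base case $Y=X$, exclusion of $Y=\emptyset$, and the check that $Y\cup\{P_j\}$ still consists of distinct points with positive multiplicities) is complete. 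For comparison, the proof in \cite{NT} runs through a different, sheaf-theoretic route: for nested zero-dimensional schemes $Z'\subseteq Z$ one has an exact sequence $0 \to \mathcal I_Z \to \mathcal I_{Z'} \to \mathcal F \to 0$ with $\mathcal F$ supported at finitely many points, so vanishing of $H^1(\mathcal I_Z(t))$ forces vanishing of $H^1(\mathcal I_{Z'}(t))$; that argument needs no fat-point-specific machinery but does need cohomology, whereas yours stays entirely inside the elementary inductive framework of \cite{CTV} that the rest of this paper uses.
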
\par

This implies that if $Z=m_1P_1+\cdots +m_sP_s$ is the set of fat points defining by $I$, and $U=\underset{P_i\in Y}{\sum}m_iP_i$ is the set of fat points defining by $J$, then 
$$\reg(U) \le \reg(Z).$$

\begin{lemma}\label{lem27} \cite[Theorem 3.4]{Th4} Let $P_1, \ldots, P_{s+2}$ be
distinct points not on a $(s-1)$-dimensional linear subspace in $\mathbb P^n$; $s \le n$; and 
$m_1, \ldots, m_{s+2}$ be positive integers. Put $I=\wp_1^{m_1} \cap \cdots \cap\wp_{s+2}^{m_{s+2}}$. Then, $$\reg(R/I) = \max \{ T_j |\  j=1,\ldots,n \},$$
where
$$T_j:= \max\left\{\left[\frac{\sum_{l=1}^q m_{i_l}+ j- 2}{j}\right] |\
P_{i_1}, \ldots , P_{i_q} \text{ on a }j\text{-dimensional linear subspace}\right\}.$$
\end{lemma}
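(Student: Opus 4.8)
The plan is to prove the equality $\reg(R/I)=T$, where $T:=\max\{T_j\mid j=1,\dots,n\}$, by induction on $s$, peeling off one point at each step with Lemma \ref{lem21}. After relabelling I assume $m_1\ge\cdots\ge m_{s+2}$. Since $P_1,\dots,P_{s+2}$ do not lie on any $(s-1)$-plane, their linear span has dimension $s$ or $s+1$, and much of the argument is a case distinction according to which of these occurs. The base case $s=1$ concerns three distinct points: when they are collinear the value is $m_1+m_2+m_3-1$ by Davis--Geramita \cite{DG}, and otherwise it is $m_1+m_2-1$ by Lemma \ref{lem24} (or by \cite{CTV} when the ambient space is $\mathbb P^2$); in both cases one checks directly that this equals $T$.

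For the inductive step I write $I=J\cap\wp_{s+2}^{m_{s+2}}$ with $J=\wp_1^{m_1}\cap\cdots\cap\wp_{s+1}^{m_{s+1}}$, so that Lemma \ref{lem21} gives
$$\reg(R/I)=\max\{\,m_{s+2}-1,\ \reg(R/J),\ \reg(R/(J+\wp_{s+2}^{m_{s+2}}))\,\}.$$
The first term is harmless, since $m_{s+2}-1\le m_1+m_2-1\le T_1\le T$. For the middle term, deleting $P_{s+2}$ leaves $s+1$ points whose span still has dimension at least $s-1$ (otherwise the whole set would lie in a $(s-1)$-plane), so they satisfy the hypothesis of the lemma with $s$ replaced by $s-1$; the induction hypothesis then yields $\reg(R/J)=T(U)$ for $U=m_1P_1+\cdots+m_{s+1}P_{s+1}$, and $T(U)\le T$ because every subspace configuration counted for $U$ is also counted for $Z$. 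Thus the whole problem is reduced to locating the cross-term $\reg(R/(J+\wp_{s+2}^{m_{s+2}}))$ and checking that the resulting maximum is exactly $T$.

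The upper bound on the cross-term is obtained from Lemma \ref{lem22}: after choosing coordinates with $\wp_{s+2}=(X_1,\dots,X_n)$, it suffices to produce, for each $i=0,\dots,m_{s+2}-1$ and each monomial $M$ of degree $i$ in $X_1,\dots,X_n$, a form $F\in J$ of degree $T$ with $X_0^{T-i}M-F\in\wp_{s+2}^{i+1}$. I would construct $F$ as a product of $T$ hyperplanes chosen to absorb the vanishing order $m_l$ at each $P_l$ while matching $X_0^{T-i}M$ to first order at $P_{s+2}$; the number of hyperplanes that can be shared among the points is governed by which $P_l$ lie on the linear subspaces through $P_{s+2}$, and the exponent $T$ is exactly large enough to make the construction possible. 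The lower bound on $\reg(R/I)$ for every proper subspace term of $T$ is then supplied by restriction: if a $j$-plane $L$ contributing to $T$ meets the support in a proper subset $W'$, Lemma \ref{lem26} gives $\reg(R/I)\ge\reg(W')$, and $\reg(W')$ equals the corresponding $T_j$-value by the induction hypothesis (when $W'$ consists of $j+2$ points spanning $L$), by Lemma \ref{lem24}, or by Lemma \ref{lem23}.

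The step I expect to be the main obstacle is the single remaining term in which the dominant $j$-plane is the full span itself: when $Z$ spans exactly $\mathbb P^s$ and $T=T_s=[\,(\,\sum_{i}m_i+s-2\,)/s\,]$, restriction is useless because $W'=Z$, and the lower bound $\reg(R/I)\ge T_s$ cannot be bootstrapped from smaller configurations. Here I would pin the cross-term from below as well as above, computing $\reg(R/(J+\wp_{s+2}^{m_{s+2}}))$ exactly through a careful Hilbert-function count for $s+2$ points in $\mathbb P^s$, and then verify the arithmetic identity that makes the three-term maximum collapse to $T_s$. Organising this two-sided estimate of the cross-term, together with the bookkeeping that reconciles the Segre bounds of $Z$ and of $U$ across the two span cases, is where the real work lies.
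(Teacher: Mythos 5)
First, a point of context: the paper you are working from does not prove this statement at all --- it is quoted as Lemma \ref{lem27} directly from \cite[Theorem 3.4]{Th4}, so there is no proof here to compare yours against, and your proposal must be judged on its own. Its architecture --- induction on $s$, peeling off one point with Lemma \ref{lem21}, discarding the term $m_{s+2}-1$, handling $\reg(R/J)$ by the induction hypothesis (this part is correct: $s+1$ of the points cannot lie on an $(s-2)$-plane, and likewise every proper subset of $q$ points spans at least a $(q-2)$-plane, so induction and Lemma \ref{lem26} do give the lower bound $T_j$ for every $j$-plane meeting the support in a \emph{proper} subset), and bounding the cross-term via Lemma \ref{lem22} by products of hyperplanes --- is indeed the standard framework of \cite{CTV}, \cite{Th4}, \cite{TS}. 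But the two steps you explicitly defer are not finishing touches; they are the entire content of the theorem, and your outline contains no argument for either.

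Concretely: (a) for the upper bound on $\reg(R/(J+\wp_{s+2}^{m_{s+2}}))$ you must produce, for each monomial $M$ of degree $i\le m_{s+2}-1$, a product of $T-i$ hyperplanes, each avoiding $P_{s+2}$, such that every $P_l$ lies on at least $m_l$ of them. Any hyperplane through two points that are collinear with $P_{s+2}$ automatically contains $P_{s+2}$, so the points on each line through $P_{s+2}$ must be covered by pairwise disjoint batches of hyperplanes; showing that the Segre bound $T$ is exactly large enough for this covering is a genuine combinatorial analysis in which the hypothesis ``not on an $(s-1)$-plane'' is consumed. Your sentence ``the exponent $T$ is exactly large enough to make the construction possible'' is the statement to be proved, not a proof. (b) Your plan for the case where the maximizing subspace is the span of all $s+2$ points is a gap, not a sketch: ``computing the cross-term exactly through a careful Hilbert-function count'' is as hard as the theorem itself, and there is no reason the cross-term should equal $T$; note also that for only slightly larger configurations the regularity genuinely falls below the Segre bound (Example \ref{ex44}), so any such count must use the bound $s+2$ on the number of points in an essential way. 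The known way through this case is structural: if $T_j$ for $j=\dim(\mathrm{span})$ strictly dominates all $T_{j'}$ coming from proper subconfigurations, the points are forced into linearly general position (otherwise a collinear triple or a degenerate planar subset would already produce a larger $T_1$ or $T_2$), whence they lie on a rational normal curve and one invokes \cite[Proposition 7]{CTV}, together with a transfer statement such as Theorem \ref{theorem35} when the span is a proper subspace of $\mathbb P^n$. This forcing argument --- visible in miniature in the case analysis $p<n$ versus $p=n$ in the proof of Proposition \ref{prop42} --- is the missing idea in your proposal.
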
\par

If $P_1, \ldots, P_s$ are on a $r$-dimensional linear subspace $\alpha \subset \mathbb P^n$ and there are no $j+2$ of the points $P_1, \ldots, P_s$ on any $j$-dimensional linear subspace for $j<r$; then we call $P_1, \ldots, P_s$ to be in linearly general position on $\alpha$. The following lemmas were proved in \cite{TS}.

\begin{lemma}\label{lem28} \cite[Theorem 3.1]{TS} Let $P_1, \ldots, P_{s}$ be
distinct points in general position on an $r$-dimensional linearly linear subspace $\alpha$ in $\mathbb P^n$, $s \le r+3$. Let $m_1, \ldots, m_s$ be positive integers and $Z=m_1P_1+\cdots+m_sP_s$.  Then, $$\reg(Z) = \max\{T_1, T_r\},$$ 
where \begin{align*}
T_1&=\max\{m_i+m_j-1\ |\ i\ne j; i, j=1, \ldots, s\},\\
T_r&=\left[\frac{m_1+\cdots +m_{s} +r-2}{r} \right]. \end{align*}
\end{lemma}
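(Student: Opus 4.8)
The plan is to pass to the linear span of the points, place them on a rational normal curve, and then read off the regularity from the known formula for fat points on such a curve. First I would normalize: after relabeling we may take $m_1\ge\cdots\ge m_s$, and we may assume the points are non-degenerate in $\alpha$ (this is the case of interest; otherwise one replaces $\alpha$ by the linear span of the points and argues there). Non-degeneracy forces $s\ge r+1$, so in fact $r+1\le s\le r+3$ and $s\ge 2$. By the invariance result (Theorem \ref{theorem35}) we have $\reg(Z)=\reg(Z_{\alpha})$, where $Z_{\alpha}=m_1P_{1\alpha}+\cdots+m_sP_{s\alpha}$ is viewed in $\alpha\cong\mathbb P^r$, so it suffices to compute $\reg(Z_{\alpha})$ in $\mathbb P^r$. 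Note that the two test numbers are intrinsic to $\alpha$: since the points are in linearly general position, every line carries at most two of them, so $T_1=m_1+m_2-1$, and the only $r$-plane through all the points is $\alpha$ itself, so $T_r=\left[(m_1+\cdots+m_s+r-2)/r\right]$.

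The key geometric step is that $P_1,\ldots,P_s$ lie on a single rational normal curve $C\subset\mathbb P^r$. When $s=r+3$ this is the classical fact that $r+3$ points in linearly general position in $\mathbb P^r$ lie on a (unique) rational normal curve; our general-position hypothesis for $j=r-1$ says precisely that no $r+1$ of the points lie on a hyperplane, which is the condition that classical statement requires. When $s<r+3$ I would enlarge $\{P_1,\ldots,P_s\}$ by $r+3-s$ generic points of $\alpha$; a generic choice keeps the enlarged set in linearly general position, so that set lies on a rational normal curve, and hence so do the original points.

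With the points on $C$ I would invoke the Catalisano--Trung--Valla formula for fat points on a rational normal curve (\cite[Proposition 7]{CTV}, quoted in the Introduction): since $s\ge 2$,
$$\reg(Z_{\alpha})=\max\left\{m_1+m_2-1,\ \left[\frac{m_1+\cdots+m_s+r-2}{r}\right]\right\}=\max\{T_1,T_r\}.$$
Combining this with $\reg(Z)=\reg(Z_{\alpha})$ finishes the proof.

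The main obstacle is the geometric step: one must be sure that the general-position hypothesis as defined here (no $j+2$ points on a $j$-plane for every $j<r$) really does force the $\le r+3$ points onto one rational normal curve, and in particular that the generic extension used when $s<r+3$ can be carried out while preserving general position. If one prefers to avoid the classical rational-normal-curve theorem, the same conclusion can be reached from the in-house toolkit: the bound $\reg(Z)\ge T_1$ comes from Lemma \ref{lem23} together with the monotonicity Lemma \ref{lem26}, the bound $\reg(Z)\ge T_r$ is the Segre lower bound for the full space, and the matching upper bound $\reg(Z)\le\max\{T_1,T_r\}$ follows from Lemma \ref{lem27} when $s=r+2$ and from Ballico et al.'s Segre bound (Lemma \ref{lem25}) when $s=r+3$, after the arithmetic check that the intermediate Segre numbers $T_j$ with $1<j<r$ are dominated by $\max\{T_1,T_r\}$.
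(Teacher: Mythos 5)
The paper never proves this lemma: it is imported as a preliminary with a citation to \cite[Theorem 3.1]{TS}, so there is no in-paper proof to match yours against. Your main route is a correct, self-contained derivation inside this paper's toolkit, and it is not circular: Theorem \ref{theorem35} is proved in Section 3 using only Lemmas \ref{lem21}, \ref{lem22}, \ref{lem23} and Lemmas \ref{lem31}--\ref{lem33}, none of which depend on Lemma \ref{lem28}. Granting that theorem, your steps are sound: reduce to $Z_{\alpha}$ in $\mathbb P^r$; note that the paper's notion of linearly general position gives, at $j=r-1$, exactly the hypothesis of the classical theorem that $r+3$ points in linearly general position in $\mathbb P^r$ lie on a rational normal curve; handle $s<r+3$ by generic augmentation (the bad locus for adding one point is a finite union of proper linear subspaces, so this iterates); and then \cite[Proposition 7]{CTV} returns $\max\{m_1+m_2-1,\ \left[(m_1+\cdots+m_s+r-2)/r\right]\}=\max\{T_1,T_r\}$. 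This is consonant with the paper's own remark after the lemma, which points at the rational-normal-curve connection in the case $r=n$, $s=n+3$. What your argument buys is conceptual economy: once the paper's new invariance theorem is available, the whole statement becomes a corollary of the classical curve theorem plus \cite{CTV}; the original proof in \cite{TS} could not have proceeded this way, since one direction of the invariance (Proposition \ref{prop34}) is new to this paper and only the one-sided Lemma \ref{lem31} from \cite{BFL} existed before.

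Two caveats. First, the degenerate case $s\le r$, which the hypotheses allow, is not quite finished by saying ``pass to the span'': the span has dimension $d=s-1$, and the argument there yields $\reg(Z)=\max\{T_1,T_d\}$ with $T_d\ge T_r$, not literally $\max\{T_1,T_r\}$; you still owe the (easy) check that linear independence of the points forces $T_d\le T_1$, so that both maxima collapse to $T_1$. Second, and more substantively, the fallback route you sketch at the end does not work as stated: there is no general ``Segre lower bound'' $\reg(Z)\ge T_r$. The paper's own Example \ref{ex44} exhibits equimultiple points in $\mathbb P^2$ with $\reg(Z)=3m-1<3m=T_2(Z)$, where $T_2$ is precisely the full-space Segre number, so that inequality can fail. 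For points in linearly general position the inequality is true, but it is exactly the nontrivial content that the rational normal curve supplies (the lower-bound half of \cite[Proposition 7]{CTV}); Lemma \ref{lem25} only bounds from above, so for $s=r+3$ the in-house route has no source for the lower bound. Keep the primary argument and drop or repair the aside.
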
\par

We see that the Segre's bound is attained for this case. If $r=n$ and $s=n+3$, the $P_1, \ldots, P_{n+3}$ are in linearly general position in $\mathbb P^n$. So, there exists a rational normal curve passing through $P_1, \ldots, P_{n+3}$. Then we get Proposition 7 in \cite{CTV}.

\begin{lemma}\label{lem29} \cite[Corollary 3.2]{TS} Let $P_1, \ldots, P_s$ be distinct points in $\mathbb P^n$, $s\le 5$. Let $m$ be a positive integer and $Z=mP_1+\cdots+mP_s$. Then 
$$\reg(Z) = \max \{ T_j |\  j=1,\ldots,n \},$$
where
$$T_j:= \max\left\{\left[\frac{qm+ j- 2}{j}\right] |\
P_{i_1}, \ldots , P_{i_q} \text{ on a }j\text{-dimensional linear subspace}\right\}.$$ 
\end{lemma}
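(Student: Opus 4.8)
The plan is to establish $\reg(Z)=\max\{T_j\mid j=1,\dots,n\}$ by a finite case analysis on the projective configuration of the $s\le 5$ points, computing $\reg(Z)$ \emph{exactly} in each case and checking that the value agrees with the Segre's bound $T(Z):=\max_jT_j$; because Lemma~\ref{lem28} and Davis--Geramita \cite{DG} give equalities (not mere bounds), this produces both inequalities at once and no separate lower-bound argument is needed. First I would pass to the span: let $\alpha$ be the linear subspace spanned by $P_1,\dots,P_s$, of dimension $r\le s-1\le 4$. Every subset of the points lying on a $j$-plane already lies in $\alpha$, so the numbers $T_j$ are computed entirely inside $\alpha$, and Lemma~\ref{lem28} is stated for points lying on an $r$-dimensional subspace of $\mathbb P^n$, so it applies directly without leaving $\mathbb P^n$. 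The decisive numerical feature of the hypothesis $s\le 5$ is that whenever $r\ge 2$ one automatically has $s\le 5\le r+3$, so Lemma~\ref{lem28} covers the whole configuration as soon as the points are in linearly general position on $\alpha$; the only remaining cases are those in which a proper subset degenerates.

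\emph{General position.} If $P_1,\dots,P_s$ are in linearly general position on $\alpha$, then for $r=1$ they are collinear and $\reg(Z)=sm-1=T_1=T(Z)$ by Davis--Geramita \cite{DG}, while for $r\ge 2$ Lemma~\ref{lem28} gives $\reg(Z)=\max\{T_1,T_r\}$ with $T_1=2m-1$ and $T_r=\bigl[(sm+r-2)/r\bigr]$. It then remains to see that the intermediate values are dominated. In linearly general position a $j$-plane with $j<r$ carries at most $j+1$ of the points, so
$$T_j\le\left[\frac{(j+1)m+j-2}{j}\right]=(m+1)+\left[\frac{m-2}{j}\right]\le 2m-1=T_1\qquad(m\ge 2),$$
whence $T(Z)=\max\{T_1,T_r\}=\reg(Z)$. (The borderline $m=1$ reduces to reduced points in general position and is checked separately.)

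\emph{Degenerate sub-configurations.} If the points are not in linearly general position on $\alpha$, then for $s\le 5$ the only possibilities are that three, four, or five of them are collinear, or (when $r=3$) that four of them are coplanar. In each such case I would peel off one suitably chosen point $P$ of multiplicity $m$ and apply Lemma~\ref{lem21} to $I=J\cap\wp^m$, where $J$ defines the remaining fat points, obtaining
$$\reg(Z)=\max\{m-1,\ \reg(R/J),\ \reg(R/(J+\wp^m))\}.$$
The point $P$ is chosen so that the scheme defined by $J$ falls under a case already settled --- Davis--Geramita for a collinear remainder, Lemma~\ref{lem28} for a remainder in general position on its own span --- which evaluates $\reg(R/J)$. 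The target value is $b=T(Z)$, which in the collinear cases equals $(\text{maximal number of collinear points})\cdot m-1$ and in the coplanar case equals $2m$. One then verifies $\reg(R/(J+\wp^m))\le b$ via the monomial criterion of Lemma~\ref{lem22}, i.e. by exhibiting, for each monomial $M$ of degree $i$ in the variables cutting out $P$, a form congruent to $X_0^{b-i}M$ lying in $J+\wp^{i+1}$, for $i=0,\dots,m-1$.

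The main obstacle will be precisely this last step: the explicit membership $X_0^{b-i}M\in J+\wp^{i+1}$ in the degenerate configurations, where the concrete geometry enters because the auxiliary forms must be assembled from the linear equations of the lines and planes supporting the degenerate subsets. A secondary, combinatorial difficulty is to organize the peeling so that it always preserves the value $T(Z)$ and so that the index $j$ realizing the maximum is correctly identified in every configuration; the estimates above, which show that a heavily populated low-dimensional subspace dominates the bound once $m\ge 2$, are what make this bookkeeping feasible and are exactly why the threshold $s\le 5$ (equivalently $s\le r+3$ on the span, the hypothesis of Lemma~\ref{lem28}) cannot be relaxed by these methods alone.
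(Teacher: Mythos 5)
First, a caveat on the comparison itself: the paper never proves Lemma \ref{lem29} --- it is imported wholesale from \cite[Corollary 3.2]{TS} --- so your proposal can only be judged on its own terms, and on those terms it is not yet a proof. Your general-position branch is complete and correct: passing to the span $\alpha$ of dimension $r$, using Davis--Geramita \cite{DG} when $r=1$, noting that $r\ge 2$ together with $s\le 5$ forces $s\le r+3$ so that Lemma \ref{lem28} applies, and the estimate $T_j\le m+1+\left[\frac{m-2}{j}\right]\le 2m-1=T_1$ for $j<r$ correctly shows the intermediate values are dominated. The genuine gap is the degenerate branch, which is where all the content of the lemma lies (the other branch is essentially a quotation of Lemma \ref{lem28}). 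You enumerate the configurations and correctly identify the target values $b=T(Z)$, but you do not carry out a single one of the memberships $X_0^{b-i}M\in J+\wp^{i+1}$ demanded by Lemma \ref{lem22}, and you yourself label them ``the main obstacle.'' These verifications do go through --- e.g.\ for four coplanar points plus an outside point $P$, take the plane $H$ of the four points, write $H=X_0+G$ with $G\in\wp$ linear, and observe that $X_0^{2m-i}M\equiv X_0^{m-i}H^mM \pmod{\wp^{i+1}}$ with $H^mM\in J$; for collinear subsets one multiplies powers of lines avoiding $P$, exactly as in the paper's Example \ref{ex44} --- but until such a computation is exhibited for each configuration, what you have is a plan, not a proof.

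There is also a concrete flaw in the opening claim that ``no separate lower-bound argument is needed.'' That is true only if the peeling is arranged so that the remaining scheme itself realizes $T(Z)$, i.e.\ $\reg(R/J)=T(Z)$; Lemma \ref{lem22} can only bound $\reg(R/(J+\wp^m))$ from above, so it never supplies the lower bound. Your stated recipe --- choose $P$ so that the remainder is either collinear or in general position on its span --- fails precisely here. Take five points in a plane with exactly one collinear triple $A,B,C$ (or two triples sharing $A$), so $T(Z)=T_1=3m-1$: every peeling whose remainder is in general position (e.g.\ removing $C$, or removing the shared point $A$) gives $\reg(R/J)=2m<3m-1$ for $m\ge 2$, and your scheme then yields only $\reg(Z)\le 3m-1$, not equality; and no peeling leaves a collinear remainder. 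The fix is either to induct on $s$ so that the degenerate four-point configuration (collinear triple plus one point) is already settled and may serve as the remainder, or to supply the lower bound separately via Lemma \ref{lem26} together with Davis--Geramita applied to the collinear subscheme, giving $\reg(Z)\ge 3m-1$ directly. You gesture at this difficulty (``organize the peeling so that it always preserves the value $T(Z)$'') but leave it unresolved, and as stated the recipe does not work.
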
\par

\begin{lemma}\label{lem210} \cite[Theorem 4.5]{TS} Let $X=\{P_1, \ldots, P_{s+3}\}$ be a set of 
distinct points not on a $(s-1)$-dimensional linear subspace in $\mathbb P^n$, $s \le n$ and $m$ be a positive integer, $m\ne 2$. Let $Z=mP_1+\cdots+mP_{s+3}$ be a set equimultiple fat points.  Then,
$$\reg(Z) = \max \{ T_j |\ j=1,\ldots, n \},$$ where
$$T_j = \max\left\{\left[\frac{mq+ j- 2}{j}\right] |\ P_{i_1},
\ldots , P_{i_q} \text{ lie on a linear }j\text{-space}\right\},$$
\end{lemma}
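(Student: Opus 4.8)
The plan is to establish the two inequalities $\reg(Z)\le T(Z)$ and $\reg(Z)\ge T(Z)$, after reducing to the situation in which the support spans its ambient space. Let $d$ be the dimension of the linear span of $P_1,\ldots,P_{s+3}$. The hypothesis that these points do not lie on an $(s-1)$-dimensional subspace gives $d\ge s$, while having only $s+3$ points forces $d\le s+2$; thus $d\in\{s,s+1,s+2\}$. Since the regularity index is unchanged when the fat points are viewed inside their linear span (this is exactly the invariance recorded here as Theorem \ref{theorem35}, and it is available in the setting of \cite{TS}), I would replace $\mathbb P^n$ by $\mathbb P^d$ and work with $s+3$ \emph{non-degenerate} fat points there.

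For the upper bound I would split according to $d$. If $d=s+2$, then the $s+3=d+1$ points span $\mathbb P^d$, hence are linearly independent and in linearly general position, so Lemma \ref{lem28} gives $\reg(Z)=\max\{T_1,T_d\}$; a routine check that on any $j$-plane with $j<d$ there are at most $j+1$ of the points shows the intermediate $T_j$ do not exceed $\max\{T_1,T_d\}$, whence $\reg(Z)=T(Z)$. If $d=s+1$, then the $d+2$ points are non-degenerate in $\mathbb P^d$ and Lemma \ref{lem27} yields $\reg(Z)=T(Z)$ outright. If $d=s$, then we have precisely $d+3$ non-degenerate fat points in $\mathbb P^d$, and the Segre bound $\reg(Z)\le T(Z)$ is furnished by Lemma \ref{lem25}. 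None of these steps needs the hypothesis $m\ne 2$.

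For the lower bound, write $T(Z)=T_{j^*}$ and let it be realised by points $P_{i_1},\ldots,P_{i_q}$ spanning a $j^*$-dimensional subspace $\beta$ (one may assume the span is exactly $\beta$, since otherwise the same points contribute a strictly larger value in lower dimension). Put $U=mP_{i_1}+\cdots+mP_{i_q}$. By Lemma \ref{lem26}, $\reg(Z)\ge\reg(U)$, and, viewing $U$ inside $\beta\cong\mathbb P^{j^*}$, a dimension count bounds $q$: the remaining $s+3-q$ points enlarge $\beta$ by at most one dimension each, so $s\le j^*+(s+3-q)$, i.e. $q\le j^*+3$. Hence $U$ falls into one of the cases $q\le j^*+1$ (linearly general position, Lemma \ref{lem28}), $q=j^*+2$ (Lemma \ref{lem27}), or $q=j^*+3$, and in the first two cases the corresponding formula gives $\reg(U)\ge T_{j^*}=T(Z)$ at once; the collinear case $j^*=1$ is handled directly by the Davis--Geramita equality \cite{DG}.

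The main obstacle is the remaining instance $q=j^*+3$ of the lower bound, in which $U$ consists of $d+3$ non-degenerate points that need not be in linearly general position on $\mathbb P^{j^*}$; then Lemma \ref{lem28} does not apply and, when $j^*$ equals the full dimension $d$, the problem is a copy of the theorem itself. I would close this by induction on $d$, with Lemma \ref{lem29} ($\le 5$ points) as base case: peel off one point using Lemma \ref{lem21}, so that $\reg(R/I)=\max\{m-1,\reg(R/J),\reg(R/(J+\wp^m))\}$ with $J$ the ideal of the remaining $s+2$ points, and control the Castelnuovo term $\reg(R/(J+\wp^m))$ through the monomial membership criterion of Lemma \ref{lem22}. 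This last verification, equivalently the statement that $H_{R/I}(T(Z)-1)<e(R/I)$, is precisely where $m\ne 2$ is indispensable: for $m=2$ the scheme of double points at general points is defective and the Hilbert function stabilises one step too soon, so the Segre bound fails to be attained --- the exceptional behaviour that the present paper isolates in Proposition \ref{prop42} and Example \ref{ex44}.
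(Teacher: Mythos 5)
This lemma is quoted by the paper from \cite[Theorem 4.5]{TS} and given no proof here, so your attempt has to stand on its own. Its skeleton is sound: the reduction to the linear span $d\in\{s,s+1,s+2\}$ via Theorem \ref{theorem35}, the upper bound through Lemmas \ref{lem28}, \ref{lem27}, \ref{lem25}, and the lower-bound cases $q\le j^*+2$ via Lemmas \ref{lem26}, \ref{lem28}, \ref{lem27} and \cite{DG} are all correct. But the case you yourself call ``the main obstacle'' --- $q=j^*+3$, and above all $j^*=d$, when the maximizing subset is all of $Z$ --- \emph{is} the theorem; everything before it is routine. For it you offer only a strategy (``induction on $d$, peel off a point by Lemma \ref{lem21}, control $\reg(R/(J+\wp^{m}))$ by Lemma \ref{lem22}'') with no step carried out. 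Note, moreover, that for a \emph{lower} bound the useful direction of Lemma \ref{lem22} is non-membership: you would have to exhibit a monomial $M$ of degree $i$ with $X_0^{b-i}M\notin J+\wp^{i+1}$, and you never produce $M$ or any argument for non-membership. What actually closes this case --- compare the proof of Proposition \ref{prop42}, which is its $m=2$ analogue --- is a combinatorial dominance analysis: if $T_d(Z)>T_j(Z)$ for all $j<d$, then $T_d(Z)>T_1(Z)\ge 2m-1$ forces $d\le 3$ when $m\ge 3$ (and $d\le 4$ when $m=2$, the case $m=1$ being elementary), and the conditions $T_d(Z)>T_2(Z),T_3(Z)$ then force the points into linearly general position, after which Lemma \ref{lem28} (or Lemma \ref{lem29}, or \cite{DG}) gives $\reg(Z)\ge T_d(Z)$. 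None of this analysis appears in your sketch, so the proof is missing exactly where the content lies.

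Your closing diagnosis of the hypothesis $m\ne 2$ is also backwards, in two respects. First, the equality $\reg(Z)=T(Z)$ does \emph{not} fail for $m=2$: Proposition \ref{prop42} proves it for $n+3$ non-degenerate double points, and Theorem \ref{theorem43} records it for every $m$; the hypothesis $m\ne 2$ in \cite{TS} marks the failure of the forcing argument above (for $m=2$, $d=4$, seven double points can have five on a hyperplane, so linearly general position is not forced, while $T_4=4$ still dominates $T_1=T_2=T_3=3$), not a failure of the statement. Second, Alexander--Hirschowitz defectivity works \emph{for} the lower bound, not against it: in Lemma \ref{lem41}, $H_U(3)=34<35=e(Z)$ forces $\reg(Z)\ge 4=T(Z)$, i.e.\ the defect pushes the regularity up to Segre's bound rather than making the Hilbert function ``stabilise one step too soon.'' Finally, Example \ref{ex44} concerns $n+4$ points and exhibits $\reg(Z)=3m-1<T(Z)=3m$ for \emph{every} positive integer $m$, so it is not an $m=2$ phenomenon.
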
\par

We see that in case $s=n$ and $m\ne 2$, the regularity index of $n+3$ non-degenerate equimultiple fat points $Z=m1P_1+\cdots+mP_{n+3}$ in $\mathbb P^n$ reaches the Segre's bound.

\section{Invariant of the regularity index under changes of linear subspace containing the support of fat points}

\ \ \ \  Let $Z=m_1P_1+ \cdots +m_sP_s$ be a set of fat points in $\mathbb P^n$, and $I$ be the ideal consists all homogeneous polynomials in $R$ vanishing at $P_i\in \mathbb P^n$ to order $m_i$; $i=1, \ldots, s$. The regularity index $\reg(Z)$ is the least integer such that the Hilbert function
$$H_{R/I}(t)=\dim_K (R/I)_t=\binom{t+n}{n}-\dim_K I_t$$
reaches the multiplicity $e(R/I)=\underset{i=1}{\overset{s}{\sum}} \binom{m_i+n-1}{n}.$
Note that the Hilbert function of an ideal is invariant under projective automorphisms. So, the regularity index of a set of fat points is also invariant under projective automorphisms.

Assume that $\alpha$ is a $r$-dimensional linear subspace of $\mathbb P^n$ containing the points $P_1, \ldots, P_s$. We consider the $r$-dimensional linear subspace $\alpha$ as a projective space $\mathbb P^r$ containing the points $P_{1\alpha}:=P_1, \ldots,
P_{s\alpha}:=P_s$, and consider $Z_{\alpha}:=m_1P_{1\alpha}+\cdots+m_sP_{s\alpha}$ as a set of fat points  in $\mathbb
P^r$.  We denote by $I_{\alpha}$ the ideal consists all homogeneous polynomials in $R_{\alpha}:=K[X_0, \ldots, X_r]$ that vanish at $P_{i\alpha}$ to order $m_i$; $i=1, \ldots, s$. The Hilbert function  of $Z_{\alpha}$ in $\mathbb P^r$ is 
$$H_{Z_{\alpha}}(t):=H_{R_{\alpha}/I_{\alpha}}(t)=\dim_K (K[X_0, \ldots, X_r]/I_{\alpha})_t=\binom{t+r}{r}-\dim_K (I_{\alpha})_t.$$
The multiplicity number of $Z_{\alpha}$ in $\mathbb P^r$ is $$e(R_{\alpha}/I_{\alpha}):=\underset{i=1}{\overset{s}{\sum}} \binom{m_i+r-1}{r}.$$
The regularity index of $Z_{\alpha}$ in $\mathbb P^r$ is the least integer $t$ such that
$H_{R/I_{\alpha}}(t)=e(R_{\alpha}/I_{\alpha})$, and we denote it by $\reg(Z_{\alpha})$.

\bigskip
We have $e(R_{\alpha}/I_{\alpha}) \le e(R/I)$, ${\displaystyle \binom{t+r}{r} \le \binom{t+n}{n}}$ and $\dim_K (I_{\alpha})_t \le  \dim_K I_t$. 
Here there are two questions: 
\bigskip

1) Question 1: $H_{Z_{\alpha}}(t)=H_Z(t)$?

2) Question 2: $\reg(Z_{\alpha})=\reg(Z)$?

\bigskip

For Question 1, we now do not know about the relation between $H_Z(t)$ and $H_{Z_{\alpha}}(t)$ if $t < \reg(Z)$. But if $t\ge \reg(Z)$, then $H_{Z_{\alpha}}(t) \le H_Z(t)$. Furthermore, if there exists $m_i\ge 2$ and $r<n$, then $H_{Z_{\alpha}}(t) < H_Z(t)$ for $t\ge \reg(Z)$. (see Corollary \ref{cor36a}).

For Question 2, due to a result of Benedetti et al. (see \cite[Lemma 4.4]{BFL}), we know that if $Z=m_1P_1+\cdots+m_sP_s$ is a set of fat points in $\mathbb P^n$ whose support is contained in a $(n-1)$-dimensional linear subspace $\alpha \cong \mathbb P^{n-1}$ and if there is a non-negative integer $t$ such that $\reg(Z_{\alpha}) \le t,$ then $$\reg(Z) \le t.$$ By inductive argument on the dimension of linear subspace, we get

\begin{lemma}\label{lem31} Let $Z=m_1P_1+\cdots+m_sP_s$ be a set of fat points in
$\mathbb P^n$ whose support is contained in a linear $r$-space $\alpha \cong
\mathbb P^r$. We may consider the linear $r$-space $\alpha$ as a $r$-dimensional
projective space $\mathbb P^r$ containing the points $P_{1\alpha}:=P_1, \ldots,
P_{s\alpha}:=P_s$ and consider $Z_{\alpha}=m_1P_{1\alpha}+\cdots+m_sP_{s\alpha}$ as a set of fat points in $\mathbb P^r$. If there is a non-negative integer $t$ such that $$\reg(Z_{\alpha}) \le t,$$
then
$$\reg(Z) \le t.$$ \end{lemma}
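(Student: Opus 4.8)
The plan is to reduce to the codimension-one case recorded just above, namely the result of Benedetti et al. \cite[Lemma 4.4]{BFL}, and to iterate it along a flag of linear subspaces joining $\alpha$ to all of $\mathbb P^n$. I would argue by induction on $d:=n-r$, the codimension of $\alpha$ in $\mathbb P^n$. If $d=0$ then $\alpha=\mathbb P^n$, so $Z_\alpha=Z$ and the conclusion is trivial.

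For the inductive step assume $d\ge 1$ and that the lemma holds for codimension $d-1$. Since $\alpha$ has dimension $r\le n-1$, it is the common zero locus of $n-r\ge 1$ linearly independent linear forms over the algebraically closed field $K$, and the zero locus of any one of them is a hyperplane $H\cong\mathbb P^{n-1}$ of $\mathbb P^n$ with $\alpha\subseteq H$. As $P_1,\ldots,P_s\in\alpha\subseteq H$, we may regard $Z_H:=m_1P_1+\cdots+m_sP_s$ as a set of fat points in $H\cong\mathbb P^{n-1}$, and there its support still lies in the $r$-space $\alpha$, now of codimension $d-1$. By the inductive hypothesis, $\reg(Z_\alpha)\le t$ gives $\reg(Z_H)\le t$. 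Finally $H$ is a hyperplane of $\mathbb P^n$ containing the support of $Z$, so \cite[Lemma 4.4]{BFL} applied to the pair $(\mathbb P^n,H)$ yields $\reg(Z)\le t$, closing the induction. Unwound, this is exactly the climb along a flag $\alpha=L_r\subset L_{r+1}\subset\cdots\subset L_n=\mathbb P^n$ with $\dim L_k=k$ and each $L_k$ a hyperplane of $L_{k+1}$: one passes from $\reg(Z_{L_k})\le t$ to $\reg(Z_{L_{k+1}})\le t$ by a single use of \cite[Lemma 4.4]{BFL} (with $n$ replaced by $k+1$), starting from $\reg(Z_{L_r})=\reg(Z_\alpha)\le t$.

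The substantive input is entirely \cite[Lemma 4.4]{BFL}, which I am assuming; the work here is purely organizational, so I do not expect a genuine obstacle. The two points to verify are elementary. First, at every level the support must actually sit in the chosen hyperplane $L_k$ of $L_{k+1}$; this is automatic because the support never leaves $\alpha=L_r\subseteq L_k$. Second, one should note that each intermediate scheme $Z_{L_k}$ is well defined regardless of whether $P_1,\ldots,P_s$ span $L_k$: the index $\reg(Z_{L_k})$ is read from the Hilbert function of $K[X_0,\ldots,X_k]/I_{L_k}$, which is defined whether or not the support is non-degenerate in $L_k$. Hence no extra hypothesis on the position of the points is needed, and the induction runs cleanly.
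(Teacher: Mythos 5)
Your proof is correct and matches the paper's own argument: the paper likewise obtains Lemma \ref{lem31} by iterating the hyperplane case of Benedetti et al.\ \cite[Lemma 4.4]{BFL} ``by inductive argument on the dimension of linear subspace,'' which is precisely your climb along the flag $\alpha = L_r \subset L_{r+1} \subset \cdots \subset L_n = \mathbb P^n$. Your write-up just makes explicit the details (base case, choice of intermediate hyperplane, well-definedness of each $Z_{L_k}$) that the paper leaves to the reader.
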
\par

Now we prove that the converse is also true. The fisrt, we prove the following lemma.

\begin{lemma}\label{lem32}  Let $\alpha$ be a $(n-1)$-dimensional linear subspace in $\mathbb P^n$, and let $P_1, \ldots, P_s$ be distinct points on $\alpha$. Let $m_1, \ldots, m_s$ be positive integers. We may consider the $(n-1)$-dimensional linear subspace $\alpha$ as a $(n-1)$-dimensional
projective space $\mathbb P^r$ containing the points $P_{1\alpha}:=P_1, \ldots,
P_{s\alpha}:=P_s$, and consider $Z_{\alpha}=m_1P_{1\alpha}+\cdots+m_sP_{s\alpha}$ as a set of fat points in $\mathbb P^{n-1}$. Put $R_{\alpha}=K[X_0, \ldots, X_{n-1}]$. Let $\wp_j \subset R$ be the defining prime ideal of $P_j\in \mathbb P^n$, and $\wp_{j\alpha} \subset R_{\alpha}$ be the defining prime ideal of $P_{j\alpha}\in \mathbb P^r$; $j=1, \ldots, s$. Put $J=\wp_1^{m_1} \cap \cdots \cap \wp_{s-1}^{m_{s-1}}$ and $J_{\alpha}=\wp_{1\alpha}^{m_1} \cap \cdots \cap \wp_{(s-1)\alpha}^{m_{s-1}}$. If there is a non-negative integer $b$ such that 
$$\reg(R_{\alpha}/(J_{\alpha}+\wp_{s\alpha}^{m_s})) > b, $$ then
$$\reg(R/(J+\wp_s^{m_s})) > b.$$
\end{lemma}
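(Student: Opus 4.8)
The plan is to realize the passage from $\mathbb P^{n-1}$ to $\mathbb P^{n}$ as a single \emph{restriction homomorphism} and to show that it induces a graded surjection between the two Artinian algebras; the desired inequality of regularity indices then drops out of the resulting comparison of Hilbert functions.

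First I would normalize coordinates. Since the Hilbert function, and hence the regularity index, of a zero-scheme is invariant under projective automorphisms of the ambient space, I may assume after a linear change of variables that $\alpha = V(X_n)$, with the identification $\alpha \cong \mathbb P^{n-1}$ given by $[X_0:\cdots:X_{n-1}:0]\mapsto[X_0:\cdots:X_{n-1}]$; an automorphism carrying an arbitrary hyperplane to $V(X_n)$ restricts to an automorphism of $\alpha$ compatible with the points $P_j$, so both $\reg(R/(J+\wp_s^{m_s}))$ and $\reg(R_\alpha/(J_\alpha+\wp_{s\alpha}^{m_s}))$ are unchanged. Writing $R=R_\alpha[X_n]$, I then consider the graded $K$-algebra homomorphism $\phi\colon R\to R_\alpha$ determined by $\phi(X_n)=0$ and $\phi(X_i)=X_i$ for $i<n$, which is surjective in every degree: a monomial of $R_t$ is killed if it involves $X_n$ and is sent to itself otherwise.

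The key step is the computation $\phi(\wp_j^{m_j})=\wp_{j\alpha}^{m_j}$ for each $j$. Because each $P_j$ lies on $\alpha$, the linear forms vanishing at $P_j$ restrict under $X_n\mapsto 0$ exactly to the linear forms vanishing at $P_{j\alpha}$, so $\phi(\wp_j)=\wp_{j\alpha}$; since $\wp_j$ is generated in degree one, applying $\phi$ to products of its generators yields $\phi(\wp_j^{m_j})=\wp_{j\alpha}^{m_j}$. Consequently $\phi(\wp_s^{m_s})=\wp_{s\alpha}^{m_s}$, and for any $f\in J=\bigcap_{j<s}\wp_j^{m_j}$ one has $\phi(f)\in\wp_{j\alpha}^{m_j}$ for all $j<s$, hence $\phi(J)\subseteq J_\alpha$. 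Therefore $\phi(J+\wp_s^{m_s})\subseteq J_\alpha+\wp_{s\alpha}^{m_s}$, so $\phi$ descends to a graded homomorphism $\bar\phi\colon R/(J+\wp_s^{m_s})\to R_\alpha/(J_\alpha+\wp_{s\alpha}^{m_s})$; and since the inclusion $R_\alpha\hookrightarrow R$ is a section of $\phi$, the map $\bar\phi$ remains surjective in every degree.

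To finish, I would record that for the Artinian algebras $A=R/(J+\wp_s^{m_s})$ and $B=R_\alpha/(J_\alpha+\wp_{s\alpha}^{m_s})$ the relation $A_{t+1}=R_1A_t$ forces the Hilbert function, once zero, to stay zero, so $\reg(\cdot)>b$ is equivalent to nonvanishing of the degree-$b$ component. The surjection $\bar\phi$ gives $\dim_K A_b\ge\dim_K B_b$ for every $b$; the hypothesis $\reg(B)>b$ says $B_b\neq 0$, whence $A_b\neq 0$ and $\reg(A)>b$, as required. I expect no serious obstacle: the only points demanding care are the \emph{exactness} of $\phi(\wp_j^{m_j})=\wp_{j\alpha}^{m_j}$ (rather than a mere inclusion) and the observation that well-definedness of $\bar\phi$ needs only $\phi(J)\subseteq J_\alpha$, with surjectivity supplied for free by the section $R_\alpha\hookrightarrow R$. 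The substance is recognizing that the whole statement is just the inequality $\reg(A)\ge\reg(B)$ repackaged in the ``$>b$'' form convenient for the induction of Lemma \ref{lem21}.
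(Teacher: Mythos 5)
Your proof is correct, but it takes a genuinely different route from the paper's. After the same coordinate normalization ($\alpha = V(X_n)$, $P_s=(1,0,\ldots,0)$), the paper works entirely through the monomial criterion of Lemma~\ref{lem22}: from the hypothesis it extracts a monomial witness $X_0^{b-i}M \notin J_\alpha+\wp_{s\alpha}^{i+1}$ (with $M$ a monomial of degree $i\le m_s-1$ in $X_1,\ldots,X_{n-1}$), argues by contradiction that the same monomial satisfies $X_0^{b-i}M \notin J+\wp_s^{i+1}$ in $R$, and then invokes Lemma~\ref{lem22} once more to conclude. You never touch Lemma~\ref{lem22}: you construct the graded restriction surjection $\bar\phi\colon R/(J+\wp_s^{m_s}) \twoheadrightarrow R_\alpha/(J_\alpha+\wp_{s\alpha}^{m_s})$ induced by $X_n\mapsto 0$ and compare Hilbert functions degreewise. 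Your argument in fact yields more, namely $\dim_K\bigl[R/(J+\wp_s^{m_s})\bigr]_t \ge \dim_K\bigl[R_\alpha/(J_\alpha+\wp_{s\alpha}^{m_s})\bigr]_t$ for every $t$, i.e.\ $\reg(R/(J+\wp_s^{m_s}))\ge \reg(R_\alpha/(J_\alpha+\wp_{s\alpha}^{m_s}))$ outright, of which the stated implication is the ``$>b$'' shadow. It is also more robust: the delicate point in the paper's proof is the assertion that $X_0^{b-i}M \in J+\wp_s^{i+1}$ forces $X_0^{b-i}M\in J$ or $X_0^{b-i}M\in\wp_s^{i+1}$, a disjunction that fails for sums of arbitrary (non-monomial) ideals and is justified there only by a loose basis argument; the clean repair of that step is exactly your mechanism, since the restriction map fixes $X_0^{b-i}M$ and carries $J+\wp_s^{i+1}$ into $J_\alpha+\wp_{s\alpha}^{i+1}$. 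What the paper's route buys is uniformity with the rest of the article, where Lemma~\ref{lem22} is the workhorse (e.g.\ in the induction of Proposition~\ref{prop34}); what yours buys is a stronger, case-free statement whose only technical inputs are $\phi(\wp_j^{m_j})\subseteq \wp_{j\alpha}^{m_j}$ and the standard fact that the Hilbert function of a standard graded quotient stays zero once it vanishes.
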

\begin{proof} Consider distinct points $P_1, \ldots, P_s$ on $\alpha$, an $(n-1)$-dimensional linear subspace in $\mathbb P^n$. We may assume that  $n$ points $(\underset{1}{\underbrace{1}}, 0, \ldots, 0, 0, 0), \ldots, (0, 0, \ldots, 0,\underset{n}{\underbrace{1}}, 0)$ span $\alpha$. Put $P_s=(1, 0, \ldots, 0)$, then $\wp_s=(X_1, \ldots, X_n)$, $P_{s\alpha}=(1, 0, \ldots, 0)\in \mathbb P^{n-1}$ and $\wp_{s\alpha}=(X_1, \ldots, X_{n-1})$. If 
$$\reg(R_{\alpha}/(J_{\alpha}+\wp_{s\alpha}^{m_s})) > b, $$
then by Lemma \ref{lem22} there exists a monomial $M=X_1^{c_1} \cdots X_{n-1}^{c_{n-1}} \in \wp_{s\alpha}$, $c_1+\cdots+c_r=i$, $i\in \{0, \ldots, m_s-1\}$, such that $$X_0^{b-i}M \notin J_{\alpha}+\wp_{s\alpha}^{i+1}.$$
Now we consider $M=X_1^{c_1} \cdots X_{n-1}^{c_{n-1}}$ in $K[X_0, \ldots, X_n]$. We claim that $$X_0^{b-i}M \notin J+\wp_s^{i+1}.$$ 

In fact, if $X_0^{b-i}M \in J+\wp_s^{i+1}$, then $X_0^{b-i}M \in \left[ J+\wp_s^{i+1}\right]_b$. Consider the finite dimensional vector space $[J+\wp_s^{i+1}]_b$ over the field $K$ with the monomial $X_0^{b-i}M$ being an element of it's basis, we get $$X_0^{b-i}M \in J \text{ or }X_0^{b-i}M \in \wp_s^{i+1}.$$
In case of $X_0^{b-i}M \in J$, then $X_0^{b-i}M=X_0^{b-i}X_1^{c_1} \cdots X_{n-1}^{c_{n-1}} \in K[X_0, \ldots, X_{n-1}]$ and $X_0^{b-i}M$ vanishing at the points $P_j\in \alpha$ to order $m_j$; $j=1, \ldots, s$. Thus $X_0^{b-i}M \in J_{\alpha}$, this is a contradiction with $X_0^{b-i}M \notin J_{\alpha}+\wp_{s\alpha}^{i+1}$. In case of $X_0^{b-i}M \in \wp_s^{i+1}$, then $X_0^{b-i}M=X_0^{b-i}X_1^{c_1} \cdots X_{n-1}^{c_{n-1}} \in K[X_0, \ldots, X_{n-1}]$ and $X_0^{b-i}M$ vanishing at the points $P_s\in \alpha$ to order $i+1$. Thus $X_0^{b-i}M \in \wp_{s\alpha}^{i+1}$, this is a contradiction with $X_0^{b-i}M \notin J_{\alpha}+\wp_{s\alpha}^{i+1}$. Thus, we have the claim
$$X_0^{b-i}M \notin J+\wp_s^{i+1}.$$ Then, by Lemma \ref{lem22} we get
$$\reg(R/(J+\wp_{s\alpha}^{m_s})) > b.$$
\end{proof}

From the above lemma,  by inductive argument on the dimension of linear subspace, we get the following lemma.

\begin{lemma}\label{lem33}  Let $\alpha$ be an $r$-dimensional linear subspace in $\mathbb P^n$, $1 \le r \le n-1$, and let $P_1, \ldots, P_s$ be distinct points on $\alpha$. Let $m_1, \ldots, m_s$ be positive integers. We may consider the $r$-dimensional linear subspace $\alpha$ as a $r$-dimensional
projective space $\mathbb P^r$ containing the points $P_{1\alpha}:=P_1, \ldots,
P_{s\alpha}:=P_s$, and $Z_{\alpha}=m_1P_{1\alpha}+\cdots+m_sP_{s\alpha}$ as a zero-scheme in $\mathbb
P^r$. Put $R_{\alpha}=K[X_0, \ldots, X_r]$. Let $\wp_j \subset R$ be the defining prime ideal of $P_j$, and $\wp_{j\alpha} \subset R_{\alpha}$ be the defining prime ideal of $P_{j\alpha}$; $j=1, \ldots, s$. Put $J=\wp_1^{m_1} \cap \cdots \cap \wp_{s-1}^{m_{s-1}}$ and $J_{\alpha}=\wp_{1\alpha}^{m_1} \cap \cdots \cap \wp_{(s-1)\alpha}^{m_{s-1}}$. If there is a non-negative integer $b$ such that 
$$\reg(R_{\alpha}/(J_{\alpha}+\wp_s^{m_s})) > b, $$ then
$$\reg(R/(J+\wp_{s\alpha}^{m_s})) > b.$$
\end{lemma}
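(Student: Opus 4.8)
The plan is to prove the statement by induction on the codimension $n-r$, using Lemma \ref{lem32} as the single inductive step. The base case $n-r=1$ is precisely Lemma \ref{lem32}, so all that remains is to climb from dimension $r$ up to dimension $n$ one dimension at a time, invoking Lemma \ref{lem32} at each stage. Throughout I use the notation in which the hypothesis reads $\reg(R_\alpha/(J_\alpha+\wp_{s\alpha}^{m_s}))>b$ and the conclusion reads $\reg(R/(J+\wp_s^{m_s}))>b$, matching the direction of Lemma \ref{lem32}.

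First I would fix a flag of linear subspaces
$$\alpha = \alpha_r \subset \alpha_{r+1} \subset \cdots \subset \alpha_{n-1} \subset \alpha_n = \mathbb P^n,$$
where $\dim \alpha_k = k$ and every $\alpha_k$ contains all of $P_1, \ldots, P_s$ (possible since the points already lie on $\alpha = \alpha_r$). Choosing coordinates so that $\alpha_k = \{X_{k+1} = \cdots = X_n = 0\}$ — harmless because the regularity index is invariant under projective automorphisms — lets me view each $\alpha_k$ as $\mathbb P^k$ with coordinate ring $R_{\alpha_k} = K[X_0, \ldots, X_k]$. For each $k$ let $\wp_{s,\alpha_k} \subset R_{\alpha_k}$ be the defining prime of the image of $P_s$, and set $J_{\alpha_k} = \wp_{1,\alpha_k}^{m_1} \cap \cdots \cap \wp_{(s-1),\alpha_k}^{m_{s-1}}$. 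With this compatible choice, the data $(J_{\alpha_k},\,\wp_{s,\alpha_k}^{m_s})$ and $(J_{\alpha_{k+1}},\,\wp_{s,\alpha_{k+1}}^{m_s})$ in consecutive rings are exactly the $(J_\alpha,\wp_{s\alpha}^{m_s})$ and $(J,\wp_s^{m_s})$ of Lemma \ref{lem32}, applied to the hyperplane $\alpha_k \subset \alpha_{k+1} \cong \mathbb P^{k+1}$.

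Next I would run the induction. The base of the chain is the hypothesis $\reg(R_{\alpha_r}/(J_{\alpha_r} + \wp_{s,\alpha_r}^{m_s})) > b$. For the inductive step, suppose that for some $r \le k \le n-1$ I have already established
$$\reg\bigl(R_{\alpha_k}/(J_{\alpha_k} + \wp_{s,\alpha_k}^{m_s})\bigr) > b.$$
Applying Lemma \ref{lem32} with ambient space $\alpha_{k+1} \cong \mathbb P^{k+1}$ (playing the role of $\mathbb P^n$, with $n$ replaced by $k+1$) and hyperplane $\alpha_k \cong \mathbb P^k$ (the $(k+1-1)$-dimensional subspace) yields
$$\reg\bigl(R_{\alpha_{k+1}}/(J_{\alpha_{k+1}} + \wp_{s,\alpha_{k+1}}^{m_s})\bigr) > b.$$
Iterating from $k = r$ up to $k = n-1$ gives, after $n-r$ applications of Lemma \ref{lem32}, the desired inequality $\reg(R/(J + \wp_s^{m_s})) > b$.

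The argument is essentially mechanical once Lemma \ref{lem32} is in hand, so the only place needing genuine care — and the step I expect to be the main obstacle — is checking that the hypothesis of Lemma \ref{lem32} is actually met at each stage: that the restriction of $J_{\alpha_{k+1}}$ to the hyperplane $\alpha_k$ is $J_{\alpha_k}$, and that $\wp_{s,\alpha_k}^{m_s}$ really is the correct power of the defining prime of $P_s$ inside $R_{\alpha_k}$. This is guaranteed by placing all the $\alpha_k$ in a single flag with coordinates chosen so that $P_s = (1,0,\ldots,0)$ and $\wp_{s,\alpha_k} = (X_1, \ldots, X_k)$ for every $k$; then the vanishing-order conditions defining the ideals are preserved under each one-dimensional extension, and the bookkeeping closes up cleanly.
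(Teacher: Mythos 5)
Your proposal is correct and is essentially the paper's own argument: the paper proves Lemma \ref{lem33} in one line, saying it follows from Lemma \ref{lem32} ``by inductive argument on the dimension of linear subspace,'' which is exactly your flag-of-subspaces induction. Your explicit choice of a coordinate flag $\alpha_r \subset \alpha_{r+1} \subset \cdots \subset \alpha_n = \mathbb P^n$ with $P_s=(1,0,\ldots,0)$ simply fills in the bookkeeping the paper leaves implicit.
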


\bigskip

By using the above lemmas we prove the following result.

\begin{proposition}\label{prop34}  Let $Z=m_1P_1+\cdots+m_sP_s$ be fat points in
$\mathbb P^n$ whose support is contained in an $r$-dimensional linear space $\alpha \cong
\mathbb P^r$. We may consider the $r$-dimensional linear space $\alpha$ as a projective space $\mathbb P^r$ containing the points $P_{1\alpha}:=P_1, \ldots,
P_{s\alpha}:=P_s$, and $Z_{\alpha}=m_1P_{1\alpha}+\cdots+m_sP_{s\alpha}$ as a zero-scheme in $\mathbb
P^r$. If there is a non-negative integer $b$ such that 
$$\reg(Z_{\alpha}) > b, $$ then
$$\reg(Z) > b.$$
\end{proposition}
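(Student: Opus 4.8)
The plan is to induct on the number of points $s$, reducing the statement in each step to the already-established Lemma~\ref{lem33} by means of the inductive regularity formula of Lemma~\ref{lem21}. For the base case $s=1$, both $Z=m_1P_1$ and $Z_\alpha=m_1P_{1\alpha}$ are single fat points, and the regularity index of a single fat point $m_1 P$ equals $m_1-1$ in any ambient projective space (the Hilbert function of $R/\wp^{m_1}$ stabilizes at the multiplicity $\binom{m_1+n-1}{n}$ in degree $m_1-1$). Hence $\reg(Z)=\reg(Z_\alpha)=m_1-1$, and the implication holds trivially.

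For the inductive step I would single out one point, say $P_s$, and set $\wp=\wp_s$, $a=m_s$, together with $J=\wp_1^{m_1}\cap\cdots\cap\wp_{s-1}^{m_{s-1}}$ in $R$ and the corresponding $J_\alpha=\wp_{1\alpha}^{m_1}\cap\cdots\cap\wp_{(s-1)\alpha}^{m_{s-1}}$ in $R_\alpha$. Applying Lemma~\ref{lem21} in $\mathbb P^n$ and in $\mathbb P^r$ gives
$$\reg(R/I)=\max\{m_s-1,\ \reg(R/J),\ \reg(R/(J+\wp_s^{m_s}))\},$$
$$\reg(R_\alpha/I_\alpha)=\max\{m_s-1,\ \reg(R_\alpha/J_\alpha),\ \reg(R_\alpha/(J_\alpha+\wp_{s\alpha}^{m_s}))\}.$$
Assuming $\reg(Z_\alpha)=\reg(R_\alpha/I_\alpha)>b$, at least one of the three terms on the right-hand side of the second display exceeds $b$, and I would dispose of the three cases in turn.

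If $m_s-1>b$, then the same quantity appears in the first display, so $\reg(Z)\ge m_s-1>b$. If $\reg(R_\alpha/J_\alpha)>b$, observe that $J_\alpha$ defines the fat-point scheme $Z'_\alpha=m_1P_{1\alpha}+\cdots+m_{s-1}P_{(s-1)\alpha}$, whose support still lies in $\alpha$; the induction hypothesis applied to these $s-1$ points yields $\reg(R/J)=\reg(Z')>b$, whence $\reg(Z)\ge\reg(R/J)>b$. Finally, if $\reg(R_\alpha/(J_\alpha+\wp_{s\alpha}^{m_s}))>b$, this is exactly the hypothesis of Lemma~\ref{lem33}, which delivers $\reg(R/(J+\wp_s^{m_s}))>b$, and again $\reg(Z)\ge\reg(R/(J+\wp_s^{m_s}))>b$. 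In every case $\reg(Z)>b$, closing the induction.

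The genuinely delicate content has already been isolated in Lemmas~\ref{lem32} and \ref{lem33}: comparing the Artinian quotients $R/(J+\wp_s^{m_s})$ and $R_\alpha/(J_\alpha+\wp_{s\alpha}^{m_s})$ across the change of ambient space is handled there by the monomial criterion of Lemma~\ref{lem22}. By contrast, the proposition itself is essentially a bookkeeping induction; the only points requiring care are that the reduction point $P_s$ may be chosen freely, since Lemma~\ref{lem21} is symmetric in the points, and that deleting a point preserves containment of the remaining support in $\alpha$, so that the induction hypothesis applies verbatim to $Z'$.
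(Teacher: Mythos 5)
Your proof is correct and follows essentially the same route as the paper: induction on $s$, splitting off the last point via Lemma~\ref{lem21} in both $\mathbb P^n$ and $\mathbb P^r$, the inductive hypothesis for the $\reg(R_\alpha/J_\alpha)$ case, and Lemma~\ref{lem33} for the Artinian quotient $R_\alpha/(J_\alpha+\wp_{s\alpha}^{m_s})$. The only difference is cosmetic: you treat $m_s-1>b$ as a separate trivial case, whereas the paper orders the multiplicities $m_1\ge\cdots\ge m_s$ and invokes Lemma~\ref{lem23} to show $\reg(R/J)\ge m_s-1$ so that this term drops out of the maximum; your variant is, if anything, marginally cleaner, since Lemma~\ref{lem23} formally requires at least two points and so needs a separate remark when $s=2$.
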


\begin{proof} Without loss of generality we may assume that $m_1 \ge \ldots \ge m_s$. Put $R_{\alpha}=K[X_0, \ldots, X_r]$ and denote by $\wp_{j\alpha}$ the homogeneous prime ideal in $R_{\alpha}$ defining by $P_{j\alpha}$; $j=1, \ldots, s$. We argue by induction on $s$. If $s=1$, then $Z=m_1P_1$ in $\mathbb P^n$ and $Z_{\alpha}=m_1P_{1\alpha}$ in $\mathbb P^r$. It is well known that $\reg(m_1P_1)=m_1-1$ and $\reg(m_1P_{1\alpha})=m_1-1$. Suppose that the lemma is true for $s-1$. Put $J=\wp_1^{m_1} \cap \cdots \cap \wp_{s-1}^{m_{s-1}}$, $I=J \cap \wp_s^{m_s}$,  $J_{\alpha}=\wp_{1\alpha}^{m_1} \cap \cdots \cap \wp_{(s-1)\alpha}^{m_{s-1}}$, $I_{\alpha}=J_{\alpha} \cap \wp_{s\alpha}^{m_s}$. By Lemma \ref{lem21} we get 
 $$\reg(R/I) =
\max\left\{m_s-1, \reg(R/J), \reg(R/(J+\wp_s^{m_s})) \right\}$$
and
$$\reg(R_{\alpha}/I_{\alpha}) =
\max\left\{m_s-1, \reg(R_{\alpha}/J_{\alpha}), \reg(R_{\alpha}/(J_{\alpha}+\wp_{s\alpha}^{m_s})) \right\}.$$

By Lemma \ref{lem23} we have $\reg(R/J) \ge m_1+m_2-1$ and $\reg(R_{\alpha}/J_{\alpha}) \ge m_1+m_2-1$. So, $\reg(R/J) \ge m_s-1$ and $\reg(R_{\alpha}/J_{\alpha})\ge m_s-1$. Therefore, 
$$\reg(R/I) =
\max\left\{\reg(R/J), \reg(R/(J+\wp_s^{m_s})) \right\}$$
and
$$\reg(R_{\alpha}/I_{\alpha}) =
\max\left\{\reg(R_{\alpha}/J_{\alpha}), \reg(R_{\alpha}/(J_{\alpha}+\wp_{s\alpha}^{m_s})) \right\}.$$

If $\reg(R_{\alpha}/I_{\alpha}) > b$ in $\mathbb P^r$, then we consider two following cases:

\bigskip

{\it Case $\reg(R_{\alpha}/J_{\alpha}) > b$:} Then by inductive assumption we have
$\reg(R/J) > b.$
Therefore, $$\reg(R/I) >  b.$$

\bigskip

{\it Case $\reg(R_{\alpha}/(J_{\alpha}+\wp_{s\alpha}^{m_s})) > b$:} Then by using Lemma \ref{lem33} we get 
$\reg(R/(J+\wp_s^{m_s})) > b$. Therefore, $$\reg(R/I) > b.$$
\end{proof}

From Lemma \ref{lem31} and Proposition \ref{prop34} we get the following theorem which answers Question 2.

\begin{theorem}\label{theorem35} Let $Z=m_1P_1+\cdots+m_sP_s$ be fat points in
$\mathbb P^n$ whose support is contained in a linear $r$-space $\alpha \cong
\mathbb P^r$. We may consider the linear $r$-space $\alpha$ as a $r$-dimensional
projective space $\mathbb P^r$ containing the points $P_{1\alpha}:=P_1, \ldots,
P_{s\alpha}:=P_s$, and consider $Z_{\alpha}=m_1P_{1\alpha}+\cdots+m_sP_{s\alpha}$ as a zero-scheme in $\mathbb
P^r$. Denote $\reg(Z)$ the regularity index of $Z$ in $\mathbb P^n$, and denote $\reg(Z_{\alpha})$ the regularity of $Z_{\alpha}$ in $\mathbb P^n$. Then
$$\reg(Z_{\alpha})=\reg(Z).$$

\end{theorem}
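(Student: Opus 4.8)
The plan is to derive the equality $\reg(Z_\alpha) = \reg(Z)$ as the conjunction of two opposite inequalities, each of which is already supplied by a result in hand. Since $\reg$ takes values in the non-negative integers, it suffices to prove $\reg(Z) \le \reg(Z_\alpha)$ and $\reg(Z_\alpha) \le \reg(Z)$ separately and then combine them.

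For the first inequality $\reg(Z) \le \reg(Z_\alpha)$, I would invoke Lemma \ref{lem31} directly. That lemma asserts that whenever a non-negative integer $t$ satisfies $\reg(Z_\alpha) \le t$, one necessarily has $\reg(Z) \le t$. Choosing $t := \reg(Z_\alpha)$, which is legitimate since $\reg(Z_\alpha)$ is a non-negative integer and $\reg(Z_\alpha) \le \reg(Z_\alpha)$ holds trivially, yields $\reg(Z) \le \reg(Z_\alpha)$ immediately.

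For the reverse inequality $\reg(Z_\alpha) \le \reg(Z)$, I would use Proposition \ref{prop34} in contrapositive form. The proposition states that if $\reg(Z_\alpha) > b$ for some non-negative integer $b$, then $\reg(Z) > b$; equivalently, $\reg(Z) \le b$ forces $\reg(Z_\alpha) \le b$. Setting $b := \reg(Z)$ then gives $\reg(Z_\alpha) \le \reg(Z)$. Putting the two inequalities together delivers $\reg(Z_\alpha) = \reg(Z)$, as required.

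I do not expect any genuine obstacle at this final stage, because all the substantive work has already been absorbed into the two cited results; the theorem is essentially a bookkeeping step that assembles the two directions. The real difficulty lies upstream, in Proposition \ref{prop34}, whose induction on $s$ rests on Lemma \ref{lem33}, which is in turn obtained by iterating the codimension-one comparison of Lemma \ref{lem32}; there the decisive move is the monomial-basis argument showing that membership of $X_0^{b-i}M$ in $J + \wp_s^{i+1}$ would force membership in either $J$ or $\wp_s^{i+1}$, each of which descends to the subspace $\alpha$ and contradicts the hypothesis through Lemma \ref{lem22}. Hence the only care needed here is to verify that the hypotheses of Lemma \ref{lem31} and Proposition \ref{prop34}, namely that the support of $Z$ lies in the linear $r$-space $\alpha \cong \mathbb{P}^r$, coincide exactly with the hypotheses of the theorem, which they plainly do.
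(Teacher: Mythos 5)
Your proof is correct and follows exactly the paper's own route: the paper derives Theorem \ref{theorem35} precisely by combining Lemma \ref{lem31} (giving $\reg(Z) \le \reg(Z_\alpha)$) with Proposition \ref{prop34} in contrapositive form (giving $\reg(Z_\alpha) \le \reg(Z)$). Your instantiations $t := \reg(Z_\alpha)$ and $b := \reg(Z)$ are exactly the bookkeeping the paper leaves implicit.
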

Hence, the regularity index of a set of fat points is invariant under changes of linear subspace containing the support of fat points.

\bigskip

\begin{corollary}\label{cor36a} Let $Z=m_1P_1+\cdots+m_sP_s$ be fat points in
$\mathbb P^n$ whose support is contained in a linear $r$-space $\alpha \cong
\mathbb P^r$. We may consider the linear $r$-space $\alpha$ as a $r$-dimensional
projective space $\mathbb P^r$ containing the points $P_{1\alpha}:=P_1, \ldots,
P_{s\alpha}:=P_s$, and consider $Z_{\alpha}=m_1P_{1\alpha}+\cdots+m_sP_{s\alpha}$ as a zero-scheme in $\mathbb
P^r$.  If $t\ge \reg(Z)$, then 
$$H_{Z_{\alpha}}(t) \le H_Z(t).$$
Furthermore, if there exists $m_i\ge 2$ and $r<n$, then $$H_{Z_{\alpha}}(t) < H_Z(t)$$ for $t\ge \reg(Z)$.

\end{corollary}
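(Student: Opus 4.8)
The plan is to reduce the comparison of Hilbert functions to a comparison of the two multiplicities, invoking Theorem \ref{theorem35}. By that theorem $\reg(Z)=\reg(Z_\alpha)$, so for any $t\ge\reg(Z)$ we also have $t\ge\reg(Z_\alpha)$, and both Hilbert functions have already stabilized at their respective multiplicities:
$$H_Z(t)=e(R/I)=\sum_{i=1}^s\binom{m_i+n-1}{n},\qquad H_{Z_\alpha}(t)=e(R_\alpha/I_\alpha)=\sum_{i=1}^s\binom{m_i+r-1}{r}.$$
Consequently, for $t\ge\reg(Z)$ the inequality $H_{Z_\alpha}(t)\le H_Z(t)$ is equivalent to $e(R_\alpha/I_\alpha)\le e(R/I)$, and the strict version to $e(R_\alpha/I_\alpha)<e(R/I)$. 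So the whole statement becomes a purely numerical comparison of binomial sums.

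Next I would prove this comparison term by term. For a single positive integer $m$, I would interpret $\binom{m+r-1}{r}$ as the number of monomials of degree at most $m-1$ in $r$ indeterminates (by the hockey-stick identity $\sum_{d=0}^{m-1}\binom{d+r-1}{r-1}=\binom{m+r-1}{r}$), and similarly $\binom{m+n-1}{n}$ as the number of such monomials in $n$ indeterminates. Since $r\le n$, every monomial in the first $r$ indeterminates is also a monomial in all $n$ of them, giving $\binom{m_i+r-1}{r}\le\binom{m_i+n-1}{n}$ for each $i$. Summing over $i=1,\dots,s$ yields $e(R_\alpha/I_\alpha)\le e(R/I)$, which establishes the first assertion.

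For the strict inequality I would use the hypothesis that $r<n$ and that some $m_i\ge 2$. For that index $i$, since $m_i-1\ge 1$, a degree-one monomial in an indeterminate beyond the first $r$ is counted on the right-hand side but not on the left, so $\binom{m_i+r-1}{r}<\binom{m_i+n-1}{n}$ strictly, while the inequality remains valid (non-strictly) for every other index. Summing then gives $e(R_\alpha/I_\alpha)<e(R/I)$, hence $H_{Z_\alpha}(t)<H_Z(t)$ for all $t\ge\reg(Z)$.

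I do not expect any real obstacle here: once Theorem \ref{theorem35} has been established, the statement collapses to the elementary monomial-counting inequality above. The only point deserving care is the exact condition for strictness---one needs both an extra indeterminate ($r<n$) and a multiplicity large enough ($m_i\ge 2$) to furnish a monomial genuinely involving that extra indeterminate.
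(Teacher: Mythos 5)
Your proposal is correct and follows essentially the same route as the paper: invoke Theorem \ref{theorem35} to conclude both Hilbert functions have stabilized at their multiplicities for $t\ge\reg(Z)$, then compare $\sum_{i=1}^s\binom{m_i+r-1}{r}$ with $\sum_{i=1}^s\binom{m_i+n-1}{n}$ termwise. The only difference is that you spell out the binomial inequality (and its strictness when $m_i\ge 2$, $r<n$) via monomial counting, whereas the paper simply asserts it; this is a harmless elaboration of the same argument.
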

\begin{proof} We know that the Hilbert function $H_Z(t)$ strictly increases until it reaches the multiplicity $e(Z)$, at which it stabilizes, and  the regularity index $\reg(Z)$  is  the least integer $t$ such that $H_{R/I}(t)=e(Z)$. So, if $t\ge \reg(Z)$ then $H_Z(t)=e(Z)=\underset{i=1}{\overset{s}{\sum}} \binom{m_i+n-1}{n}$. 

For fat points $Z_{\alpha}$ in $\mathbb P^r$, we also get: if $t\ge \reg(Z_{\alpha})$, then $H_{Z_{\alpha}}(t)=\underset{i=1}{\overset{s}{\sum}} \binom{m_i+r-1}{r}$.  By Theorem \ref{theorem35}, 
$\reg(Z)=\reg(Z_{\alpha})$. Therefore, if $t\ge \reg(Z)=\reg(Z_{\alpha})$ then
$$H_{Z_{\alpha}}(t)=\underset{i=1}{\overset{s}{\sum}} \binom{m_i+r-1}{r} \le
 \underset{i=1}{\overset{s}{\sum}} \binom{m_i+n-1}{n}=H_Z(t).$$
This result implies that if $m_1=\cdots=m_s=1$ then $H_{Z_{\alpha}}(t) = H_Z(t)$ for $t\ge \reg(Z)$, and if there exists $m_i\ge 2$ and $r<n$, then $H_{Z_{\alpha}}(t) < H_Z(t)$ for $t\ge \reg(Z)$.

\end{proof}

\bigskip

\begin{corollary}\label{cor36}
Let $Z=m_1P_1+\cdots+m_sP_s$ be fat points in $\mathbb P^n$ with the support in a $r$-dimensional linear subspace $\alpha$. If $t\ge \reg(Z)$, then we have the relation between dimension of the $t$-th graded part $I_t$ of ideal $I$ consisting all homogeneous polynomials in $K[X_0, \ldots, X_n]$ that vanish at $Pi$ to order $m_i$; $i=1, \ldots, s$; and dimension of the $t$-th graded part $(I_{\alpha})_t$ of ideal $I_{\alpha}$ consisting all homogeneous polynomials in $K[X_0, \ldots, X_r]$ that vanish at $Pi$ to order $m_i$; $i=1, \ldots, s$; as following:
\begin{align*}
&(t+r+1)\cdots (t+n)\left[\underset{i=1}{\overset{s}{\sum}} \binom{m_i+r-1}{r} +\dim_K(I_{\alpha})_t \right]\\&=(r+1)\cdots (n-1)n\left[\underset{i=1}{\overset{s}{\sum}} \binom{m_i+n-1}{n} + \dim_K I_t \right].
\end{align*}
\end{corollary}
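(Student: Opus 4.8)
The plan is to observe that the bracketed expressions on each side are nothing but $\binom{t+r}{r}$ and $\binom{t+n}{n}$, after which the statement collapses to a routine binomial identity. First I would recall the two defining relations for the Hilbert functions,
$$H_Z(t) = \binom{t+n}{n} - \dim_K I_t, \qquad H_{Z_\alpha}(t) = \binom{t+r}{r} - \dim_K (I_\alpha)_t.$$
Since $t \ge \reg(Z)$ and, by Theorem~\ref{theorem35}, $\reg(Z) = \reg(Z_\alpha)$, the argument in Corollary~\ref{cor36a} shows that both Hilbert functions have already stabilized at their multiplicities, namely $H_Z(t) = \sum_{i=1}^s \binom{m_i+n-1}{n}$ and $H_{Z_\alpha}(t) = \sum_{i=1}^s \binom{m_i+r-1}{r}$. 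Substituting these into the two displayed relations and rearranging yields
$$\sum_{i=1}^s \binom{m_i+r-1}{r} + \dim_K (I_\alpha)_t = \binom{t+r}{r}, \qquad \sum_{i=1}^s \binom{m_i+n-1}{n} + \dim_K I_t = \binom{t+n}{n},$$
so each bracket in the assertion is exactly the corresponding binomial coefficient.

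The second and final step is then to verify the purely combinatorial identity
$$(t+r+1)\cdots(t+n)\binom{t+r}{r} = (r+1)\cdots(n-1)n \binom{t+n}{n}.$$
I would expand both coefficients as $\binom{t+r}{r} = (t+1)\cdots(t+r)/r!$ and $\binom{t+n}{n} = (t+1)\cdots(t+n)/n!$. The left-hand side telescopes to $(t+1)(t+2)\cdots(t+n)/r!$, while on the right-hand side the leading product satisfies $(r+1)(r+2)\cdots(n-1)n = n!/r!$, so the right-hand side equals $(t+1)(t+2)\cdots(t+n)/r!$ as well. The two sides agree, which establishes the formula.

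I do not expect a genuine obstacle here: the only point needing a little care is bookkeeping the ranges of the factor products, in particular reading $(t+r+1)\cdots(t+n)$ and $(r+1)\cdots(n-1)n$ as the empty product equal to $1$ in the degenerate case $r=n$, where the identity holds trivially. All the substantive content — that the Hilbert functions have stabilized and that $\reg(Z) = \reg(Z_\alpha)$ — is already furnished by Corollary~\ref{cor36a} and Theorem~\ref{theorem35}, so this corollary is a direct arithmetic consequence of them.
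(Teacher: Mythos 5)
Your proposal is correct and follows essentially the same route as the paper: both use Theorem \ref{theorem35} to conclude $\reg(Z)=\reg(Z_\alpha)$, substitute the stabilized Hilbert function values to identify each bracket with $\binom{t+r}{r}$ and $\binom{t+n}{n}$ respectively, and then invoke the identity $(t+r+1)\cdots(t+n)\binom{t+r}{r}=(r+1)\cdots(n-1)n\binom{t+n}{n}$. The only difference is that you spell out the verification of this binomial identity (including the degenerate case $r=n$), which the paper asserts without proof within its chain of equalities.
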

\begin{proof} We may consider $\alpha$ as a $r$-dimensional
projective space $\mathbb P^r$ containing the points $P_{1\alpha}:=P_1, \ldots,
P_{s\alpha}:=P_s$, and consider $Z_{\alpha}=m_1P_{1\alpha}+\cdots+m_sP_{s\alpha}$ as a set of fat points in $\mathbb P^r$. Since $I=\underset{t \ge 0}{\oplus} I_t$ is the ideal consists all homogeneous polynomials in $K[X_0, \ldots, X_n]$ that vanish at $Pi$ to order $m_i$, $i=1, \ldots, s$; the Hilbert function of $Z$ is
$$H_Z(t)=\dim_K (K[X_0, \ldots, X_n]/I)_t=\dim_K K[X_0, \ldots, X_n]_t-\dim_K I_t=\binom{t+n}{n}-\dim_K I_t.$$
Since $I_{\alpha}=\underset{t
\ge 0}{\oplus} (I_{\alpha})_t$ is the ideal consists all homogeneous polynomials in $K[X_0, \ldots, X_r]$ that vanish at $P_{i\alpha}$ to order $m_i$, $i=1, \ldots, s$;  the Hilbert function of $Z_{\alpha}$ is
$$H_{Z_{\alpha}}(t)=\dim_K (K[X_0, \ldots, X_r]/I_{\alpha})_t=\dim_K K[X_0, \ldots, X_r]_t-\dim_K (I_{\alpha})_t=\binom{t+r}{r}-\dim_K (I_{\alpha})_t.$$
By Theorem \ref{theorem35} we have $\reg(Z)=\reg(Z_{\alpha})$. If $t\ge \reg(Z)=\reg(Z_{\alpha})$, then $H_Z(t)$ reaches the multiplicity $e(Z)=\underset{i=1}{\overset{s}{\sum}} \binom{m_i+n-1}{n}$, and $H_{Z_{\alpha}}(t)$ reaches the multiplicity $e(Z_{\alpha})=\underset{i=1}{\overset{s}{\sum}} \binom{m_i+r-1}{r}$, and so we have

$${\displaystyle \begin{cases}
\binom{t+n}{n}=\underset{i=1}{\overset{s}{\sum}} \binom{m_i+n-1}{n}+\dim_K I_t,\\
\binom{t+r}{r}=\underset{i=1}{\overset{s}{\sum}} \binom{m_i+r-1}{r}+\dim_K (I_{\alpha})_t.
\end{cases}
}$$
Hence if $t\ge \reg(Z)$, then
\begin{align*}
&(t+r+1)\cdots (t+n)\left[\underset{i=1}{\overset{s}{\sum}} \binom{m_i+r-1}{r} +\dim_K(I_{\alpha})_t \right]=(t+r+1)\cdots (t+n) \binom{t+r}{r}\\
&=(r+1)\cdots (n-1)n \binom{t+n}{n}=(r+1)\cdots (n-1)n\left[\underset{i=1}{\overset{s}{\sum}} \binom{m_i+n-1}{n} + \dim_K I_t \right].
\end{align*}
\end{proof}

\bigskip

Let $Z=m_1P_1+\cdots+m_sP_s$ be a set of fat points in $\mathbb P^n$, and let $\alpha$ be a $r$-dimensional linear subspace. We put $$\alpha \cap Z:=\underset{P_i \in \alpha}{\sum} m_iP_i$$ and
$$w_{\alpha \cap Z}:=\underset{P_i \in \alpha}{\sum} m_i.$$

\begin{corollary}\label{cor37}
Let $Z$ be a set of fat points in $\mathbb P^n$. Assume that $\alpha$ is a $r$-dimensional linear subspace such that ${\displaystyle \left[ \frac{w_{\alpha\cap Z} -2}{r} \right]=T(Z)}$. Consider $(\alpha \cap Z)_{\alpha}$ as a set of fat points in $\mathbb P^r$. If the support of $(\alpha \cap Z)_{\alpha}$ lies on a line or on a rational normal curve of $\mathbb P^r$, then $\reg(Z)=T(Z)$. 
\end{corollary}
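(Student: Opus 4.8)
\emph{Strategy.} The plan is to squeeze $\reg(Z)$ between two copies of $T(Z)$. The upper bound $\reg(Z) \le T(Z)$ is available for arbitrary fat points from the Nagel--Trok theorem \cite{NT}, so the entire content of the corollary is the reverse inequality $\reg(Z) \ge T(Z)$, which I will extract from the sub-configuration lying on $\alpha$.

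\emph{Reduction to $\mathbb P^r$.} First I set $U := \alpha \cap Z = \sum_{P_i \in \alpha} m_i P_i$. Since $U$ is obtained from $Z$ by deleting the fat points supported off $\alpha$, Lemma \ref{lem26} yields $\reg(U) \le \reg(Z)$. The support of $U$ lies in the linear $r$-space $\alpha \cong \mathbb P^r$, so Theorem \ref{theorem35} transports the regularity index faithfully into $\mathbb P^r$: $\reg(U) = \reg((\alpha\cap Z)_\alpha)$. Hence it suffices to prove $\reg((\alpha\cap Z)_\alpha) = T(Z)$, an equality that can now be checked entirely inside $\mathbb P^r$.

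\emph{Computing the regularity index in $\mathbb P^r$.} Here I invoke the two known closed formulas for the hypothesized configurations. If the support of $(\alpha\cap Z)_\alpha$ is collinear, Davis--Geramita \cite{DG} give $\reg((\alpha\cap Z)_\alpha) = w_{\alpha\cap Z} - 1$; if it lies on a rational normal curve of $\mathbb P^r$, then Catalisano, Trung and Valla \cite{CTV} give
\[
\reg((\alpha\cap Z)_\alpha) = \max\left\{ m_{i_1} + m_{i_2} - 1,\ \left[\frac{w_{\alpha\cap Z} + r - 2}{r}\right]\right\},
\]
where $m_{i_1} \ge m_{i_2}$ are the two largest multiplicities among the points on $\alpha$. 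By the hypothesis that $\alpha$ attains the Segre bound, the term $\left[\frac{w_{\alpha\cap Z}+r-2}{r}\right]$ (of which $w_{\alpha\cap Z}-1$ is the $r=1$ instance) equals $T(Z)$. The competing term $m_{i_1}+m_{i_2}-1$ cannot beat it: the two points concerned lie on a line, so $T_1(Z) \ge m_{i_1}+m_{i_2}-1$ and therefore $T(Z) \ge m_{i_1}+m_{i_2}-1$. Consequently the maximum equals $T(Z)$, i.e.\ $\reg((\alpha\cap Z)_\alpha) = T(Z)$ in both cases.

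\emph{Conclusion and main difficulty.} Chaining the inequalities gives $T(Z) = \reg((\alpha\cap Z)_\alpha) = \reg(U) \le \reg(Z) \le T(Z)$, forcing $\reg(Z) = T(Z)$. The step that needs the most care is the bookkeeping in the previous paragraph: I must confirm that the multiplicity sum $w_{\alpha\cap Z}$ and the two leading multiplicities computed for $(\alpha\cap Z)_\alpha$ in $\mathbb P^r$ coincide with the data defining the $r$-dimensional Segre term of $Z$ attached to $\alpha$ and the $1$-dimensional term attached to the line through the two heaviest points, so that the hypothesis really does pin the formula's maximum to $T(Z)$. In particular the displayed Segre term attached to $\alpha$ should read $\left[\frac{w_{\alpha\cap Z}+r-2}{r}\right]$, matching the definition of $T_r$. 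Everything else is a direct application of Lemma \ref{lem26}, Theorem \ref{theorem35} and the two cited formulas.
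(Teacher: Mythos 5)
Your proof is correct and takes essentially the same route as the paper's: both arguments squeeze $\reg(Z)$ between $\reg((\alpha\cap Z)_{\alpha})$ --- transported to $\mathbb P^r$ via Lemma \ref{lem26} and Theorem \ref{theorem35} and bounded below by the Davis--Geramita and Catalisano--Trung--Valla formulas --- and the Nagel--Trok bound $\reg(Z)\le T(Z)$, the only cosmetic difference being that you verify the exact equality $\reg((\alpha\cap Z)_{\alpha})=T(Z)$ (checking that the competing term $m_{i_1}+m_{i_2}-1\le T_1(Z)\le T(Z)$), while the paper records only a one-sided lower bound and lets the sandwich force equality. Your reading of the hypothesis as $\left[\frac{w_{\alpha\cap Z}+r-2}{r}\right]=T(Z)$ is also the right call: the displayed condition $\left[\frac{w_{\alpha\cap Z}-2}{r}\right]=T(Z)$ can never hold, since $T(Z)\ge T_r(Z)\ge\left[\frac{w_{\alpha\cap Z}+r-2}{r}\right]=\left[\frac{w_{\alpha\cap Z}-2}{r}\right]+1$, so the statement as printed is vacuous and the Segre term you use is clearly the intended one.
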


\begin{proof}

By Theorem \ref{theorem35} we have $$\reg(\alpha \cap Z)=\reg((\alpha \cap Z)_{\alpha}).$$
By using Lemma \ref{lem26} and \cite[Theorem 5.3]{NT} we get
$$\reg(\alpha \cap Z) \le \reg(Z) \le T(Z).$$
Since the support of $(\alpha \cap Z)_{\alpha}$ lies on a line or on a rational normal curve of $\mathbb P^r$, by using \cite[Corollary 2.3]{DG} and \cite[Proposition 7]{CTV} we get $$\reg((\alpha \cap Z)_{\alpha}) \ge \left[ \frac{w_{\alpha\cap Z} -2}{r}\right].$$
Note that ${\displaystyle \left[ \frac{w_{\alpha\cap Z} -2}{r} \right]=T(Z)}$ by the hypothesis. 
Therefore, $$\reg(Z)=\reg((\alpha \cap Z)_{\alpha})=T(Z).$$
 
\end{proof}

\bigskip

Let $Z=m_1P_1+\cdots+m_sP_s$ be a set of fat points in $\mathbb P^n$. If $\alpha$ is a linear subspace of $\mathbb P^n$ such that $P_1, \ldots, P_s$ span $\alpha$, then we say that $Z$ is non-degenerate in $\alpha$. From Theorem \ref{theorem35}, we have the following remark.

\begin{remark}\label{rem38} To calculate the regularity index of a set of fat points $Z$ in $\mathbb P^n$, we find a linear subspace $\alpha \cong \mathbb P^r$ such that $Z$ is non-degenerate on $\alpha$. Then we calculate the regularity index of the non-degenerate fat points $Z_{\alpha}$ in $\mathbb P^r$.
\end{remark}

\bigskip

\section{Segre's bound and the regularity index of non-degenerate equimultiple fat points}

Recall that for a set of fat points $Z=m_1P_1+\cdots+m_sP_s$ in $\mathbb P^n$, the Segre's bound for the regularity index of $Z$ is the number

$$T(Z)= \max \{ T_j(Z) |\  j=1,\ldots,n \},$$
where
$$T_j(Z) := \max\left\{\left[\frac{\sum_{l=1}^q m_{i_l}+ j- 2}{j}\right] |\
P_{i_1}, \ldots , P_{i_q} \text{ on a }j\text{-dimensional linear subspace}\right\}.$$

\begin{lemma}\label{lem41} Let $Z=2P_1+\cdots+2P_7$ be a set of seven non-degenerate double points in $\mathbb P^4$. Then
$$\reg(Z)\ge 4.$$
\end{lemma}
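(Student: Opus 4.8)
The plan is to translate the bound into a statement about cubic forms and then reduce it to a single, well-understood configuration. Since $e(R/I)=7\binom{2+4-1}{4}=7\binom{5}{4}=35$ and $\binom{3+4}{4}=35$, the inequality $\reg(Z)\ge 4$ is equivalent to $H_Z(3)<35$, i.e.\ to $\dim_K I_3\ge 1$: there must exist a nonzero cubic form vanishing to order $2$ at all seven points. Observe that the seven double points impose at most $7\cdot 5=35$ linear conditions on the $35$-dimensional space of cubics, so a naive count predicts $\dim_K I_3=0$; the genuine content of the lemma is that these conditions are \emph{never} independent.

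First I would record that $\dim_K I_3$ is upper semicontinuous as the tuple $(P_1,\ldots,P_7)\in(\mathbb P^4)^7$ varies: it is the corank of the linear map that sends a cubic to its $2$-jet (value and first derivatives) at each $P_i$, whose matrix has entries polynomial in the coordinates of the points, so $\{\dim_K I_3\ge k\}$ is closed. Hence $\dim_K I_3$ attains its minimum on a dense open set of configurations and is at least that minimum on \emph{every} configuration. It therefore suffices to exhibit one configuration, namely the generic one, with $\dim_K I_3\ge 1$; the conclusion then holds for all configurations, and in particular for every non-degenerate set of seven points.

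For the generic configuration the seven points are in linearly general position in $\mathbb P^4$, so Lemma \ref{lem28} applies with $r=n=4$ and $s=7=r+3$. There $T_1=m_1+m_2-1=3$ and $T_4=\left[\frac{\sum_{i=1}^7 m_i+4-2}{4}\right]=\left[\frac{16}{4}\right]=4$, whence $\reg(Z)=\max\{3,4\}=4$. Since the Hilbert function strictly increases until it stabilizes at $e(R/I)$ and $\reg(Z)=4>3$, we get $H_Z(3)<35$, that is $\dim_K I_3\ge 1$ for this configuration. Equivalently, these seven general points lie on a unique rational normal quartic, and one may instead invoke \cite[Proposition 7]{CTV}, or exhibit the secant cubic hypersurface of that curve, which is singular along it and hence at all seven points.

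Combining the two steps gives $\dim_K I_3\ge 1$ for every non-degenerate configuration, so $H_Z(3)\le 34<35=e(R/I)$ and therefore $\reg(Z)\ge 4$. The main obstacle is precisely that this is the Alexander--Hirschowitz exceptional triple $(d,n,s)=(3,4,7)$, where the condition count is perfectly balanced; the real point is that seven double points always fail to impose independent conditions on cubics. This is why the argument must route through an explicit generic computation (Lemma \ref{lem28}, or the rational normal curve) together with semicontinuity, rather than through a direct dimension count, which would incorrectly suggest $\dim_K I_3=0$.
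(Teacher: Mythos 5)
Your proof is correct, and it shares the paper's overall skeleton (reduce to the generic configuration, then show the generic seven double points fail to reach multiplicity in degree $3$) but differs in the key external input. The paper's proof compares $Z$ with a generic set $U=2Q_1+\cdots+2Q_7$ via the maximality of the generic Hilbert function -- exactly the semicontinuity fact you prove with the jet-evaluation matrix, so your first step is in effect a proof of what the paper merely cites -- and then invokes the Alexander--Hirschowitz theorem to get $H_U(3)=34<35$, which is precisely the exceptional case $(d,n,s)=(3,4,7)$ you identify. You instead deduce $H_U(3)<35$ from Lemma \ref{lem28} applied with $r=n=4$, $s=7=r+3$: for points in linearly general position, $\reg(U)=\max\{T_1,T_4\}=\max\{3,4\}=4>3$, so the Hilbert function has not yet stabilized at $t=3$. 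This buys self-containedness: your argument needs only results already quoted in the paper's preliminaries (Lemma \ref{lem28}, or equivalently \cite[Proposition 7]{CTV} via the rational normal quartic through seven general points), avoids the much deeper Alexander--Hirschowitz theorem, and even exhibits the offending cubic concretely as the secant (catalecticant) hypersurface of the rational normal curve, singular along it. The paper's route is shorter on the page but rests on heavier machinery. One small point of care: exhibiting ``one configuration'' with $\dim_K I_3\ge 1$ only suffices because the set where this holds is closed and you verify it on the dense open locus of linearly general position; your write-up does this correctly, but the phrasing ``it suffices to exhibit one configuration'' should not be read as allowing an arbitrary special one.
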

\begin{proof} Let $U=2Q_1+\cdots+2Q_7$ be a set of seven generic double points in $\mathbb P^4$. Denote by $H_Z(t)$ the Hilbert funtion of $Z$ and by $H_U(t)$ the Hilbert function of $U$. By the maximality of the Hilbert function of generic fat points we have
$$H_Z(t) \le H_U(t).$$
Note that $Z$ and $U$ have the same multiplicity
$$e(Z)=e(U)= 7\binom{2+4-1}{4}=35.$$
By the Alexander-Hirschowits  Theorem \cite{AH} about the Hilbert funtion of generic double points in $\mathbb P^4$ we have
$$H_U(3)=34.$$
Since $H_Z(3)\le H_U(3)=34 < 35=e(Z)$, by the definition of $\reg(Z)$ we get
$$\reg(Z)\ge 4.$$
\end{proof} 

The above lemma and Theorem \ref{theorem35} help us to calculate the regularity index of a set of $n+3$ non-degenerate double points in $\mathbb P^n$. In this case we see that the regularity index reaches the Segre's bound.

\begin{proposition}\label{prop42} Let $Z=2P_1+\cdots+2P_{n+3}$ be a set of $n+3$ non-degenerate double points in $\mathbb P^n$. Then 
$$\reg(Z)= T(Z),$$
where
$$T(Z)= \max \{ T_j(Z) |\  j=1,\ldots,n \},$$
$$T_j(Z) := \max\left\{\left[\frac{2q+ j- 2}{j}\right] |\
P_{i_1}, \ldots , P_{i_q} \text{ on a }j\text{-dimensional linear subspace}\right\}.$$

\end{proposition}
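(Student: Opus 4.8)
The plan is to establish the two inequalities $\reg(Z)\le T(Z)$ and $\reg(Z)\ge T(Z)$ separately. The upper bound is immediate: since $Z$ is supported on exactly $n+3$ non-degenerate points, Lemma~\ref{lem25} applies verbatim and gives $\reg(Z)\le T(Z)$. Hence the entire content lies in the reverse inequality $\reg(Z)\ge T(Z)$, and the rest of the argument is devoted to it.

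For the lower bound I would first locate where the maximum defining $T(Z)$ is attained: choose an index $j$ and points $P_{i_1},\ldots,P_{i_q}$ lying on a $j$-dimensional linear subspace with $T(Z)=\left[\frac{2q+j-2}{j}\right]$. Replacing that $j$-space by the linear span $\beta$ of these $q$ points, of dimension $r\le j$, can only increase the quotient, since $\left[\frac{2q+r-2}{r}\right]\ge\left[\frac{2q+j-2}{j}\right]$ for $r\le j$; as the left-hand side is also $\le T_r(Z)\le T(Z)$, one gets the equality $\left[\frac{2q+r-2}{r}\right]=T(Z)$, with the $q$ points non-degenerate in $\beta\cong\mathbb P^r$. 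Setting $U=2P_{i_1}+\cdots+2P_{i_q}$, Lemma~\ref{lem26} gives $\reg(U)\le\reg(Z)$, and Theorem~\ref{theorem35} identifies $\reg(U)$ with $\reg(U_\beta)$, the regularity of the same scheme viewed in $\mathbb P^r$. Thus it suffices to prove $\reg(U_\beta)\ge\left[\frac{2q+r-2}{r}\right]$ for the non-degenerate double-point scheme $U_\beta$ in $\mathbb P^r$.

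The decisive simplification is a counting constraint forced by non-degeneracy of the original configuration: since $P_1,\ldots,P_{n+3}$ span $\mathbb P^n$ while the chosen $q$ points span only the $r$-dimensional $\beta$, the remaining $n+3-q$ points must raise the dimension from $r$ to $n$, whence $n+3-q\ge n-r$, i.e. $q\le r+3$. So $U_\beta$ is a set of $q$ non-degenerate double points in $\mathbb P^r$ with $r+1\le q\le r+3$, and I only need the lower bound in this narrow range. Here a direct dimension count usually suffices: at $t_0=\left[\frac{2q+r-2}{r}\right]-1$ one checks $\binom{t_0+r}{r}<q\binom{r+1}{r}=e(U_\beta)$, which forces $H_{U_\beta}(t_0)\le\binom{t_0+r}{r}<e(U_\beta)$ and hence $\reg(U_\beta)\ge t_0+1=\left[\frac{2q+r-2}{r}\right]$. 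A short computation shows this inequality holds for every admissible pair $(q,r)$ except the two \emph{tight} pairs $(q,r)=(5,2)$ and $(q,r)=(7,4)$, where $\binom{t_0+r}{r}$ exactly equals the multiplicity. The pair $(5,2)$ falls under the line/rational-normal-curve mechanism of Corollary~\ref{cor37}: five points in $\mathbb P^2$ always lie on a conic, so either they lie on a (smooth) rational normal curve and \cite[Proposition 7]{CTV} gives $\reg(U_\beta)=\max\{3,5\}=5$, or three of them are collinear and \cite{DG} together with Lemma~\ref{lem26} already forces $\reg(U_\beta)\ge 2\cdot 3-1=5$.

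The main obstacle is the remaining exceptional pair $(q,r)=(7,4)$: here $t_0=3$ and $\binom{7}{4}=35$ equals the multiplicity $7\binom{5}{4}=35$, so the crude count is exactly tight and the naive estimate fails; moreover such a configuration need not lie on a rational normal curve (a quartic curve meets a hyperplane in at most four points, yet one may place five of the seven points on a hyperplane), so Corollary~\ref{cor37} does not apply either. This is precisely the Alexander--Hirschowitz exceptional value for double points, and it is exactly what Lemma~\ref{lem41} records: any seven non-degenerate double points in $\mathbb P^4$ satisfy $\reg\ge 4=\left[\frac{2\cdot 7+4-2}{4}\right]$, established by the generic comparison $H(3)=34<35$. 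Invoking Lemma~\ref{lem41} for this pair and the elementary arguments above for all others yields $\reg(U_\beta)\ge\left[\frac{2q+r-2}{r}\right]=T(Z)$, hence $\reg(Z)\ge T(Z)$, completing the proof. A secondary point requiring care is verifying that the case analysis is exhaustive, namely that after passing to the span every attaining subscheme reduces to a line, a rational normal curve, or the pair $(7,4)$; this is where the constraint $q\le r+3$ does the essential work by ruling out the degenerate ``many points on a low-dimensional subspace'' regime in which such a lower bound would otherwise be false.
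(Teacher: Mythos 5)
Your proof is correct, and its core is genuinely different from the paper's. Both arguments share the same outer skeleton: the upper bound is exactly Lemma \ref{lem25}, and the lower bound is obtained by choosing a subset of $q$ points realizing $T(Z)$, passing to its span $\beta\cong\mathbb P^r$ (the paper achieves the same normalization by taking the minimal $p$ with $T_p(Z)=T(Z)$), noting that non-degeneracy of the full set forces $r+1\le q\le r+3$, and transferring the bound back via Theorem \ref{theorem35} and Lemma \ref{lem26}. Where the proofs diverge is in how the reduced problem in $\mathbb P^r$ is settled. The paper runs an induction on $n$: the base case $n=2$ is Lemma \ref{lem29}, the cases $q=r+1,r+2$ are Lemma \ref{lem27}, the case $q=r+3$ with $r<n$ is the inductive hypothesis, and the residual case $r=n$ (which forces $n\in\{3,4\}$) is handled by a general-position argument plus Lemma \ref{lem28} for $n=3$ and by Lemma \ref{lem41} for $n=4$. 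You dispense with the induction and with Lemmas \ref{lem27}, \ref{lem28}, \ref{lem29} entirely: the trivial estimate $H_{U_\beta}(t_0)\le\binom{t_0+r}{r}<q(r+1)=e(U_\beta)$ at $t_0=T(Z)-1$ disposes of every admissible pair $(q,r)$ except exactly $(5,2)$ and $(7,4)$ --- I verified this enumeration, and in particular your count even absorbs $(6,3)$, where the paper needs the general-position analysis and Lemma \ref{lem28} --- after which $(5,2)$ follows from the conic dichotomy (three collinear points plus \cite{DG} and Lemma \ref{lem26}, or a smooth conic plus \cite[Proposition 7]{CTV}) and $(7,4)$ is precisely Lemma \ref{lem41}. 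Your route is more elementary and self-contained, replacing three substantial imported theorems and an induction with a one-line dimension count, and it makes visible that the only genuine obstruction is the Alexander--Hirschowitz exceptional configuration of seven double points in $\mathbb P^4$; the paper's route, by contrast, is shorter on the page because it delegates the work to prior machinery, and its structure stays uniform with the arguments for arbitrary multiplicities that the paper reuses when assembling Theorem \ref{theorem43}.
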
 
\begin{proof} Consider the fat points in the assumption of Lemma \ref{lem25}, we see that
\begin{align*} T(Z, L) &= \max\left\{\left[\frac{\sum_{l=1}^q m_{i_l}+ j- 2}{j}\right] |\
P_{i_1}, \ldots , P_{i_q}\in L \text{ is a $j$-dimensional linear subspace}\right\}\\
&=T_j(Z).\end{align*} 
Hence $$\max \{ T(Z, L): L \subseteq \mathbb P^n \}=T(Z).$$
By using  Lemma \ref{lem25} we get $\reg(Z)\le T(Z)$.  So, it is sufficient to prove $$\reg(Z) \ge T(Z).$$
We argue by induction on $n$. For $n=2$, then $n+3 = 5$. By Lemma \ref{lem29} we get $\reg(Z)=T(Z)$. For $n\ge 3$, we assume that the proposition is true for any $m+3$ non-degenerate double points $2P_1+\cdots+2P_{m+3}$ in $\mathbb P^m$, $m<s$. We now consider $n+3$ non-degenerate double points $Z=2P_1+\cdots+2P_{n+3}$ in $\mathbb P^n$.

Since  $P_1, \ldots, P_{n+3}$ are not not on any $(n-1)$-dimensional linear subspace, there are at most $j+3$ points of $\{P_1, \ldots, P_{n+3}\}$ on a $j$-dimensional linear subspace of $\mathbb P^n$; $j=1, \ldots, n-1$. Put
$$p=\min\{j\ | \ T_j(Z)=T(Z)\}.$$
and denote by $\alpha$  the $p$-dimensional linear subspace containing points $P_{i_1}, \ldots, P_{i_r}$ of $\{P_1, \ldots, P_{n+3}\}$ such that 
$$\left[ \frac{2r+p-2}p \right]=T_p(Z)=T(Z).$$
Then the points  $P_{i_1}, \ldots, P_{i_r}$ are not on any $(p-1)$-dimensional linear subspace. So, $r\ge p+1$. Since  $P_1, \ldots, P_{n+3}$ are non-degenerate in $\mathbb P^n$, there are at most $p+3$ points of $\{P_1, \ldots, P_{n+3}\}$ on a $p$-dimensional linear subspace. Hence $r\le p+3$.

Consider the set of $r$ double points 
$$Y=2P_{i_1}+\cdots+2P_{i_{r}}.$$ 
in $\mathbb P^n$. Put $P_{i_j\alpha}=P_j$, $j=1, \ldots, r$. Consider the set of of $r$ non-degenerate double points 
$$Y_{\alpha}=2P_{i_1\alpha}+\cdots+2P_{i_r\alpha}$$
in $\mathbb P^p$. If $r=p+1$ or $r=p+2$, then by Lemma \ref{lem27} we  get
$$\reg(Y_{\alpha})=T(Y_{\alpha})=\left[ \frac{2r+p-2}p \right].$$
So, $\reg(Y_{\alpha})=T(Z).$ By Theorem \ref{theorem35} we get $\reg(Y)=\reg(Y_{\alpha})$. By use Lemma \ref{lem26}, we obtain $$\reg(Z) \ge \reg(Y)=T(Z).$$
If $r=p+3$, then we consider two following cases for $p$:

\medskip

\noindent {\it Case $p<n$:} Since $p+3$ points $P_{i_1}, \ldots, P_{i_{p+3}}$ lie on the $p$-dimensional linear subspace $\alpha$ and do not lie on a $(p-1)$-dimensional linear subspace, by the inductive assumption we have $$\reg(Y_{\alpha})=T(Y_{\alpha}).$$
By using Theorem \ref{theorem35} we get $\reg(Y)=\reg(Y_{\alpha})$.
Now use Lemma \ref{lem26} and obtain $\reg(Z) \ge \reg(Y)$. Since $\reg(Y)=\reg(Y_{\alpha})=T(Y_{\alpha})=\left[ \frac{2r+p-2}p \right]=T(Z)$, we get
$$\reg(Z)\ge T_p(Z)=T(Z).$$

\medskip

\noindent {\it Case $p=n$:} Then $T_n(Z)>T_j(Z)$; $j=1, \ldots, n-1$. By defining we have
$$T_n(Z)=\left[\frac{(n+3)2+n-2}n\right]=3+\left[\frac{4}n \right], \text{ and } T_1(Z)\ge 2.2-1=3.$$
Since $T_n(Z) > T_1(Z)$, we have $n\le 4$. But we are considering $n\ge 3$, so $n=3$ or $n=4$.

For $n=3$, then $T_3(Z)=4=T(Z)$. Consider the set of $6$ non-degenerate double points $$Z=2P_1+\cdots+2P_6$$ in $\mathbb P^3$. If $P_1, \ldots, P_{6}$ are not in general position in $\mathbb P^3$, then there are $3$ points of $\{P_1, \ldots, P_{6}\}$ on a line or there are $4$ points of $\{P_1, \ldots, P_{6}\}$ on a $2$-dimensional linear subspace. So, $T_1(Z)\ge 5$ or $T_2(Z) \ge 4$, this is a contradiction with $T(Z)=T_p(Z)=T_3(Z)=4 > T_j(Z); j=1, 2$. So, $P_1, \ldots, P_{6}$ are in general position in $\mathbb P^3$. By using the Lemma \ref{lem28} we get $$\reg(Z)=T(Z).$$

For $n=4$, then $T_4(Z)=4=T(Z)$. Consider the set of seven non-degenerate double points $$Z=2P_1+\cdots+2P_7$$ in $\mathbb P^4.$ By Lemma \ref{lem41} we get $$\reg(Z) \ge 4=T(Z).$$

The proof of Proposition \ref{prop42} is completed.
\end{proof}

The Proposition \ref{prop42} help us to prove the following theorem which shows that the Segre's bound is attained for the regularity index of any $s+3$ non-degenerate equimultiple fat points in $\mathbb P^n$, $s\le n+3$.

\begin{theorem}\label{theorem43} Let $m$ be a positive integer and $Z=mP_1+\cdots+mP_{s}$ be a set of $s$ non-degenerate equimultiple points in $\mathbb P^n$. If $s\le n+3$, then  
$$\reg(Z)= T(Z),$$
where
$$T(Z)= \max \{ T_j(Z) |\  j=1,\ldots,n \},$$
$$T_j(Z) := \max\left\{\left[\frac{mq+ j- 2}{j}\right] |\
P_{i_1}, \ldots , P_{i_q} \text{ on a }j\text{-dimensional linear subspace}\right\}.$$
\end{theorem}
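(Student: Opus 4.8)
The plan is to first record what non-degeneracy forces on $s$, and then dispatch each possible value of $s$ to an already-established result. Since $Z$ is non-degenerate, the points $P_1, \ldots, P_s$ span $\mathbb P^n$, and spanning $\mathbb P^n$ requires at least $n+1$ points; combined with the hypothesis $s \le n+3$ this gives $n+1 \le s \le n+3$. Thus there are exactly three cases, $s = n+1$, $s = n+2$ and $s = n+3$. In each case the cited result computes $\reg(Z)$ as $\max_j T_j$, and for equimultiple points $\sum_{l=1}^q m_{i_l} = mq$, so that value is precisely $T(Z)$ as defined in the statement; hence each case yields the equality $\reg(Z)=T(Z)$ directly, with no separate treatment of the two inequalities.

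For $s = n+1$ and $s = n+2$ I would invoke Lemma \ref{lem27}, the only point requiring care being that the span hypothesis matches the lemma's assumption. When $s = n+2$, one applies Lemma \ref{lem27} with its parameter equal to $n$ (so that it concerns $n+2$ points): non-degeneracy is exactly the required "not on an $(n-1)$-dimensional linear subspace," and $n \le n$ holds, so $\reg(Z) = T(Z)$. When $s = n+1$, one takes the parameter to be $n-1$ (so that it concerns $n+1$ points), and here I would note that points spanning $\mathbb P^n$ cannot lie on any $(n-2)$-dimensional subspace, since such a subspace is contained in a hyperplane, which would contradict non-degeneracy; thus the hypothesis again holds and $\reg(Z) = T(Z)$. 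The few degenerate boundary instances, where the lemma's parameter drops to the point that the points lie on a line, are instead covered by the Davis--Geramita formula \cite{DG}, which gives $\reg(Z)=\sum_i m_i - 1 = T(Z)$ on a line.

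The remaining and genuinely substantive case is $s = n+3$, which I would split according to the multiplicity. If $m \ne 2$, Lemma \ref{lem210} applies with its parameter equal to $n$: the $n+3$ non-degenerate points are "not on an $(n-1)$-dimensional linear subspace," $n \le n$, and $m \ne 2$, so $\reg(Z) = T(Z)$. If $m = 2$, that lemma is unavailable, and it is exactly here that Proposition \ref{prop42} is needed, giving $\reg(Z) = T(Z)$ for $n+3$ non-degenerate double points. Combining the two multiplicities closes the case $s = n+3$ and, with it, the theorem. I expect the only real obstacle to lie in this $m = 2$, $s = n+3$ case — which is precisely why it was isolated as Proposition \ref{prop42} (resting in turn on Lemma \ref{lem25} and the Alexander--Hirschowitz input of Lemma \ref{lem41}) — while the rest of the argument is a bookkeeping matter of matching the span hypotheses to the parameter ranges of Lemmas \ref{lem27} and \ref{lem210}.
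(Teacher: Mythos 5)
Your proof is correct and follows essentially the same route as the paper: the same reduction to the three cases $s=n+1$, $s=n+2$, $s=n+3$, with the last case split by multiplicity into $m\ne 2$ (Lemma \ref{lem210}) and $m=2$ (Proposition \ref{prop42}), which is exactly where the paper also locates the real content. The only divergence is the case $s=n+1$: the paper observes that $n+1$ spanning points are in linearly general position and quotes Lemma \ref{lem24} to get $\reg(Z)=2m-1=T_1(Z)=T(Z)$, whereas you apply Lemma \ref{lem27} with its parameter set to $n-1$, noting that spanning points cannot lie on an $(n-2)$-dimensional subspace. Your variant is in fact slightly more robust, since Lemma \ref{lem24} carries the side hypotheses $n\ge 3$ and $m_1\ge 2$, which the paper's proof does not verify (the cases $m=1$ and $n=2$ fall outside it), while Lemma \ref{lem27} imposes no restriction on the multiplicities or on $n\ge 3$; your remark that the residual line/$\mathbb P^1$ situations are absorbed by the Davis--Geramita formula ties off a boundary case the paper leaves implicit.
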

\begin{proof} Since $P_1, \ldots, P_s$ are not on any $(n-1)$-dimensional linear subspace, we have $s\ge n+1$. We consider the three following cases for $s$.

\medskip

\noindent {\it Case $s=n+1$:} Then $P_1, \ldots, P_{n+1}$ are in general position in $\mathbb P^n$. By Lemma \ref{lem24} we get $$\reg(Z)=2m-1=T_1(Z)=T(Z).$$

\medskip

\noindent {\it Case $s=n+2$:} Then $P_1, \ldots, P_{n+2}$ are not on a $(n-1)$-dimensional linear subspace in $\mathbb P^n$. By Lemma \ref{lem27} we get $$\reg(Z)=T(Z).$$

\medskip
\noindent {\it Case $s=n+3$:} If $m\ne 2$, then by using Lemma \ref{lem210} we get
$$\reg(mP_1+\cdots+mP_{n+3})=T(Z).$$
If $m=2$, then by Proposition \ref{prop42} we get
$$\reg(2P_1+\cdots+2P_{n+3})= T(Z).$$

\end{proof}

The following simple example shows that Theorem \ref{theorem35} is not true for $n=2$, $s=n+4$.

From now on, if $H$ is a hyperplane in $\mathbb P^n$, we coincide $H$ with the linear form in $R$ defining it. So, if $H_j$ is a hyperplane passing through points $P_j$ and $m_j$ is a positive integer; $j=1, \ldots, r$; then we get $H_1^{m_1}\cdots H_r^{m_r}\in \wp_1^{m_1} \cap \cdots \cap \wp_r^{m_r}$.

\begin{example}\label{ex44} \textnormal{ Let $l_1$, $l_2$ be two distinct lines in $\mathbb P^2$ and 
$P_1, \ldots, P_6$ be six distinct points such that $P_1, P_2, P_3\in l_1\setminus l_2$; $P_4, P_5\in l_2\setminus l_1$ and $P_6 \notin l_1 \cup l_2$}. 

\bigskip

\begin{center}
\begin{tikzpicture}[line cap=round,line join=round,>=triangle 45,x=1.2cm,y=1.2cm]
\clip(6.97321718931476, 2.543530778164918) rectangle (13.6,6.6);

\draw [line width=1.2pt] (7.3,5.94)-- (8.32,5.94);
\draw [line width=1.2pt] (8.32,5.94)-- (10.32,5.94);
\draw [line width=1.2pt] (10.32,5.94)-- (12.32,5.94);
\draw [line width=1.2pt] (12.32,5.94)-- (13.32,5.94);

\draw [line width=1.2pt] (8.0,2.75)-- (9.3,3.4);
\draw [line width=1.2pt] (9.3,3.4)-- (11.3,4.4);
\draw [line width=1.2pt] (11.3,4.4)-- (12.5,5.0);

%\draw [line width=1.2pt] (10.32,6.94)-- (9.3,3.4);
%\draw [line width=1.2pt] (10.32,6.94)-- (11.3,3.4);
\begin{scriptsize}
\draw [fill=black] (8.32,5.94) circle (2.5pt);
\draw[color=black] (8.075423925667838,6.2) node {$\text{\large P}_{1}$};
\draw [fill=black] (9.32,5.94) circle (2.5pt);
\draw[color=black] (9.283089430894323,6.2) node {$\text{\large P}_2$};
\draw [fill=black] (12.32,5.94) circle (2.5pt);
\draw[color=black] (12.47031358885019,6.2) node {$\text{\large P}_3$};
\draw [fill=black] (11.3,4.4) circle (2.5pt);
\draw[color=black] (11.3,4.7) node {$\text{\large P}_4$};
\draw [fill=black] (9.3,3.4) circle (2.5pt);
\draw[color=black] (9.0871080139384,3.644494773519156) node {$\text{\large P}_5$};
\draw [fill=black] (10.2,4.7) circle (2.5pt);
\draw[color=black] (10.2,5.0) node {$\text{\large P}_6$};

\draw[color=black] (11,6.15) node {$\text{\large l}_1$};
\draw[color=black] (10.3,3.6) node {$\text{\large l}_2$};

\end{scriptsize}
\end{tikzpicture}\\
\end{center}

\bigskip

\textnormal{ Let $m$ be a positive integer, then $Z=mP_1+\cdots+mP_6$ is a set of $6$ non-degenerate equimultiple fat points in $\mathbb P^2$. We have
$$T_1(Z)=3m-1, \ T_2(Z)=\left[ \frac{6m}{2}\right]=3m=T(Z).$$}

\textnormal{Let $\wp_j$ be the homogeneous prime ideal in $\mathbb R=K[X_0, X_1, X_2]$ corresponding to the point $P_j$; $j=1, \ldots, 6$. Choose $P_6=(1, 0, 0)$, then $\wp_6=(X_1, X_2)$. Put $J=\wp_1^m\cap \cdots \cap \wp_5^m$, $I=J\cap \wp_6^m$. Then $R/I$ is the coordinate ring of $Z$, so $\reg(Z)=\reg(R/I)$, the Castelnuovo-Mumford regularity index of the ring $R/I$. By Lemma \ref{lem21} we get
 $$\reg(R/I) =
\max\left\{m-1, \reg(R/J), \reg(R/(J+\wp_6^m)) \right\}.$$}

\textnormal{ Since $l_1$ is a line passing through three points $P_1, P_2, P_3$; we have $l_1^m\in \wp_1^m\cap \wp_2^m\cap \wp_3^m$. Since $l_2$ is a line passing through two points $P_4, P_5$; we have $l_2^m\in \wp_4^m\cap \wp_5^m$. Hence, $l_1^m l_2^m \in J$. This implies that for every monomial $M$ of degree $i$ in $X_1, X_2$; $i=0, \ldots, m-1$; we have $$l_1^ml_2^m M \in J.$$
Since $l_j$ is a line avoiding $P_6=(1, 0, 0)$, we can write $l_j=X_0+g_j$ for some $g_j\in \wp_6=(X_1, X_2)$; $j=1, 2$. Thus we have
$$(X_0+g_1)^m (X_0+g_2)^m M \in J.$$
Moreover, since $g_1, g_2\in \wp_6$ and $M\in \wp_6^i$, we get $X_0^{2m}M \in J+\wp_6^{i+1}$
for $i=0, \ldots, m-1$. This implies that $$X_0^{3m-1-i}M \in J+\wp_6^{i+1}$$ for $i=0, \ldots, m-1$. By Lemma \ref{lem22} we get $$\reg(R/(J+\wp_6^2))\le 3m-1.$$}

\textnormal{Let $U=mP_1+\cdots+mP_5$, 
we have $$T_1(U)=3m-1 \ge T_2(U)=\left[ \frac{5m}2\right]=2m+\left[ \frac{m}2\right].$$
So $T(U)=T_1(U)=3m-1$. By Lemma \ref{lem29} we get $\reg(U)=3m-1.$ Since $R/J$ is the homogeneous coordinate ring of $U$. Thus $$\reg(R/J)=\reg(U)=3m-1.$$ 
Therefore,  $$\reg(Z)=\reg(R/I)=\max\{m-1, 3m-1, \reg(R/(J+\wp_6^m))\}=3m-1 < T(Z).$$}
\end{example}

By using Theorem \ref{theorem35} and Example \ref{ex44} we show that Theorem \ref{theorem43} is not true for $s=n+4$. The Segre's bound is not attained for this case.

\begin{corollary}\label{cor45} Let $m, n$ be positive integers, $n\ge 2$. There always exists a set of $n+4$ non-degenerate equimultiple fat points  $Z=mP_1+\cdots+mP_{n+4}$ in $\mathbb P^n$ such that  
$$\reg(Z)=3m-1 < T(Z)=3m,$$
where
$$T(Z)= \max \{ T_j(Z) |\  j=1,\ldots,n \},$$
$$T_j(Z) := \max\left\{\left[\frac{mq+ j- 2}{j}\right] |\
P_{i_1}, \ldots , P_{i_q} \text{ on a }j\text{-dimensional linear subspace}\right\}.$$
\end{corollary}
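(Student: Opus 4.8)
The plan is to exhibit an explicit configuration in $\mathbb P^n$ ($n\ge 2$) that generalizes Example \ref{ex44}. I would place six points $P_1,\dots,P_6$ in a plane $\alpha\cong\mathbb P^2\subset\mathbb P^n$ exactly as in Example \ref{ex44} (with $P_1,P_2,P_3$ on a line $l_1$, $P_4,P_5$ on a line $l_2$, and $P_6\in\alpha\setminus(l_1\cup l_2)$, say $P_6=(1,0,\dots,0)$ so that $\wp_6=(X_1,\dots,X_n)$), and then add $n-2$ further points $P_7,\dots,P_{n+4}$ lying on a hyperplane $H_1$ that contains $l_1$ but avoids both $\alpha$ and $P_6$, chosen generically off $l_1$. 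Since $l_1\subset\alpha$ and the $n-2$ added points are generic on $H_1$, the whole set spans $\mathbb P^n$, so $Z=mP_1+\cdots+mP_{n+4}$ is non-degenerate. For $n=2$ there are no added points and $Z$ is precisely Example \ref{ex44}.

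First I would compute the Segre bound. The six points on $\alpha$ give $T_2(Z)\ge[6m/2]=3m$, so it remains to check that nothing exceeds $3m$. Besides $\alpha$ itself, the alignments to verify are $l_1$ (three points, contributing $3m-1$) and the hyperplane $H_1$, which carries the $n+1$ points $P_1,P_2,P_3,P_7,\dots,P_{n+4}$; a direct estimate shows $[\,(m(n+1)+n-3)/(n-1)\,]\le 3m$, and generic placement rules out any $j$-plane carrying enough points to beat $3m$. Hence $T(Z)=3m$. For the lower bound $\reg(Z)\ge 3m-1$ I would restrict to the six points of $\alpha$: by Lemma \ref{lem26}, $\reg(\alpha\cap Z)\le\reg(Z)$; by Theorem \ref{theorem35}, $\reg(\alpha\cap Z)=\reg((\alpha\cap Z)_\alpha)$; and by Example \ref{ex44} the latter equals $3m-1$.

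The heart of the proof is the upper bound $\reg(Z)\le 3m-1$, which I would obtain by peeling off $P_6$. Writing $J=\bigcap_{j\ne 6}\wp_j^m$ and $I=J\cap\wp_6^m$, Lemma \ref{lem21} gives $\reg(R/I)=\max\{m-1,\reg(R/J),\reg(R/(J+\wp_6^m))\}$. The term $\reg(R/J)$ is the regularity of the $n+3$ non-degenerate equimultiple points $U'=mP_1+\cdots+mP_5+mP_7+\cdots+mP_{n+4}$; since its Segre bound is $T(U')=3m-1$ (the line $l_1$ forces $T_1=3m-1$ and, by the same estimates, nothing exceeds it), Theorem \ref{theorem43} yields $\reg(R/J)=3m-1$. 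For the last term I would generalize the computation of Example \ref{ex44}: take a second hyperplane $H_2\supset l_2$ avoiding $P_6$, so that $F:=H_1^mH_2^m$ vanishes to order $m$ at every point other than $P_6$, i.e.\ $F\in J$, and, writing $H_j=X_0+g_j$ with $g_j\in\wp_6$, one has $F\equiv X_0^{2m}\pmod{\wp_6}$. Then for every monomial $M$ of degree $i\le m-1$ in $X_1,\dots,X_n$ we get $FM\in J$ and $(F-X_0^{2m})M\in\wp_6^{\,i+1}$, whence $X_0^{2m}M\in J+\wp_6^{\,i+1}$ and therefore $X_0^{\,3m-1-i}M\in J+\wp_6^{\,i+1}$; Lemma \ref{lem22} then gives $\reg(R/(J+\wp_6^m))\le 3m-1$. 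Combining the three terms yields $\reg(Z)=3m-1<3m=T(Z)$.

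The main obstacle is the construction itself: the $n-2$ added points must simultaneously keep $Z$ non-degenerate, keep $T(Z)$ equal to $3m$, leave the $(n+3)$-point subscheme $U'$ with Segre bound exactly $3m-1$, and --- crucially --- allow the two hyperplanes $H_1\cup H_2$ to cover all points except $P_6$. Forcing the added points onto a single hyperplane $H_1\supset l_1$ is what makes the covering (and hence the key membership $F\in J$ with $F\equiv X_0^{2m}\bmod\wp_6$) possible, so the real work is verifying that this constrained-but-generic placement does not inflate any $T_j$ above $3m$; the elementary inequality $(m-1)(n-2)\ge 0$ is precisely what guarantees that the hyperplane $H_1$ contributes at most $3m$.
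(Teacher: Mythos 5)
Your proposal is correct, but it takes a genuinely different route from the paper's own proof. The paper chooses the added points $P_7,\dots,P_{n+4}$ so that $P_4,P_5,P_6,P_7,\dots,P_{n+4}$ span $\mathbb P^n$, and then argues by induction on the dimension of the nested subspaces $\alpha_j$ spanned by $P_4,P_5,P_6,\dots,P_{j+5}$: at each step it peels off the \emph{last added point} via Lemma \ref{lem21}, bounds the quotient term using a \emph{single} hyperplane power (the linear form of $\alpha_{j-1}$, which contains all earlier points and avoids the peeled one, giving $\le 2m$ via Lemma \ref{lem22}), and transfers the regularity of the smaller scheme between ambient spaces with Theorem \ref{theorem35}; Theorem \ref{theorem43} is never invoked there. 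You instead constrain the added points to lie on a hyperplane $H_1\supset l_1$, peel off $P_6$ in a single application of Lemma \ref{lem21}, compute $\reg(R/J)$ by applying Theorem \ref{theorem43} to the $n+3$ non-degenerate points $U'$, and bound the quotient term by the \emph{two}-hyperplane product $F=H_1^mH_2^m$, a direct generalization of the $l_1^ml_2^m$ computation of Example \ref{ex44}. Your route buys brevity: no induction, one decomposition, and it reuses Theorem \ref{theorem43}, which the paper proves immediately beforehand but does not exploit. The cost is a more constrained configuration and more genericity bookkeeping: non-degeneracy of $Z$ and of $U'$, the equality $T(Z)=3m$, and especially $T(U')=3m-1$ all depend on the generic placement on $H_1$, and the last of these carries real weight because Theorem \ref{theorem43} converts it \emph{exactly} into $\reg(R/J)=3m-1$; you only sketch these checks, though the decisive inequality $2(m-1)(n-2)\ge 0$ that you isolate is indeed the right one, and the verifications go through. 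The paper's route avoids computing any Segre bound of a subscheme, since $\reg(R/J)$ there comes from the inductive hypothesis rather than from a formula. One wording slip to fix: a hyperplane containing $l_1$ cannot literally ``avoid $\alpha$''; what you need, and evidently intend, is $H_1\cap\alpha=l_1$, which already forces $P_4,P_5,P_6\notin H_1$.
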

\begin{proof} We consider the set of fat points $Z$ with the configuration as following.

Let $\alpha_1$ be the $2$-dimensional linear subspace in $\mathbb P^n$ defining by $n-2$ linear forms $X_3, \ldots, X_n$; and $l_1$, $l_2$ be two distinct lines in $\alpha_1 \cong \mathbb P^2$. Let $P_1, \ldots, P_3$ be  three distinct points on $l_1\setminus \l_2$; and $P_4, P_5$ be two distinct points on $l_2\setminus l_1$. Let $P_6$ be a points in $\alpha_1$ and not on $l_1\cup l_2$. Consider the set of $6$ double points $Z_{\alpha_1}=mP_1+\cdots+mP_6$ in $\mathbb P^2$, by Example \ref{ex44} we get
$$\reg(Z_{\alpha_1})=3m-1 \text{ and } T(Z_{\alpha_1})=T_{2}(Z_{\alpha_1})=3m.$$

Since $P_4, P_5, P_6$ do not lie on a line, we can choose $P_7, \ldots, P_{n+4}$ be distinct points in $\mathbb P^n$ such that $n+1$ points $P_4, P_5, P_6, P_7, \ldots, P_{n+4}$ span $\mathbb P^n$. Then $P_4, \ldots, P_{n+4}$ are in linearly general position in $\mathbb P^n$ and $$Z=mP_1+\cdots+mP_{n+4}$$ is the set of $n+4$ non-degenerate equimultiple fat points in $\mathbb P^n$. We have
\begin{align*}
T_1(Z)&=3m-1, T_2(Z)=3m, \\
T_j(Z)&=\left[ \frac{6m+(j-2)m+j-1}j\right]=m+\left[ \frac{4m+j-2}j\right]; j=3, \ldots, n-1.
\end{align*}
Thus, $$T(Z)=T_2(Z)=3m.$$ 
For $j=2, \ldots, n-1$; the  points  $P_4, P_5, P_6, \ldots, P_{j+5}$ span a $(j+1)$-dimensional linear subspace, say $\alpha_j$.  Put $P_{i\alpha_j}=P_i$; $i=1, \ldots, j+5$; then
$$Z_{\alpha_j}=mP_{1\alpha_j}+ \cdots+mP_{(j+5)\alpha_j}$$
is the set of $j+5$ non-degenerate equimultiple fat points in $\mathbb P^j$. Since $\alpha_{n-1}=\mathbb P^n$, we get $$\reg(Z)=\reg(Z_{\alpha_{n-1}}).$$

We argue by induction on $j$ to prove that $$\reg(Z_{\alpha_j})=\reg(Z_{\alpha_1})=3m-1$$ for $j=2, \ldots, n-1$.

For $j=2$, consider the set of $7$ non-degenerate equimultiple fat points 
$$Z_{\alpha_2}=m_1P_{1\alpha_2}+\cdots+mP_{7\alpha_2}$$
in $\mathbb P^3$. Put $J_{\alpha_2}=\wp_{1\alpha_2}^m \cap \cdots \cap \wp_{6\alpha_2}^m$ and $I_{\alpha_2}=J_{\alpha_2} \cap \wp_{7\alpha_2}^m$, $R_{\alpha_2}=K[X_0, X_1, X_2, X_3]$. Then $\reg(Z_{\alpha_2})= \reg(R_{\alpha_2}/I_{\alpha_2})$. By Lemma \ref{lem21} we get
$$\reg(R_{\alpha_2}/I_{\alpha_2})=\max\{m-1, \reg(R_{\alpha_2}/J_{\alpha_2}), \reg(R_{\alpha_2}/(J_{\alpha_2}+\wp_{7\alpha_2}^m))\}.$$
Since $P_{1\alpha_2}=P_1, \ldots, P_{6\alpha_2}=P_6$ are on $\alpha_1 \cong \mathbb P^2$; by using Example \ref{ex44} we get
$$\reg(R_{\alpha_2}/J_{\alpha_2})=\reg(Z_{\alpha_1})=3m-1.$$
Let $P_{7\alpha_2}=(1, 0, 0, 0)$ in $\mathbb P^3$. Since $P_{7\alpha_2} \notin \alpha_1$ and $P_{1\alpha_2}, \ldots, P_{6\alpha_2} \in \alpha_1$, we can choose a $2$-dimensional linear subspace $H$ in $\mathbb P^3$ passing through $P_{1\alpha_2}, \ldots, P_{6\alpha_2}$ and avoiding $P_{7\alpha_2}$. Then we get $$H^m \in \wp_{1\alpha_2}^m \cap \cdots \cap \wp_{6\alpha_2}^m=J_{\alpha_2}.$$ It follows that
$$H^{2m-i}M \in J_{\alpha_2}$$
for every monomial $M$ of degree $i$ in $X_1, X_2, x_3$; $i=0, \ldots, m-1$. Since $P_{7\alpha_2}\notin H$, we can write $H=X_0+G$ for some linear form $G\in \wp_{7\alpha_2}=(X_1, X_2, X_3)$. Thus
we have $$(X_0+G)^{2m-i} M \in J_{\alpha_2}$$
for every monomial $M$ of degree $i$ in $X_1, X_2, x_3$; $i=0, \ldots, m-1$. Moreover, since $G\in \wp_{7\alpha_2}$ and $M\in \wp_{7\alpha_2}^i$, we get
$$X_0^{2m-i} M \in J_{\alpha_2}+\wp_{7\alpha_2}^{i+1}$$
for every monomial $M$ of degree $i$ in $X_1, X_2, x_3$; $i=0, \ldots, m-1$. By Lemma \ref{lem22} we get
$$\reg(R_{\alpha_2}/(J_{\alpha_2}+\wp_{7\alpha_2}^m))\le 2m.$$
Therefore, $$\reg(Z_{\alpha_2})=3m-1.$$
Assume that $\reg(Z_{\alpha_{n-2}})=3m-1$. We need prove that $\reg(Z_{\alpha_{n-1}})=3m-1$. Consider the set of $n+4$ non-degenerate equimultiple fat points $$Z=mP_1+\cdots+mP_{n+4}$$ in $\alpha_{n-1}=\mathbb P^n$. Put $J=\wp_1^m \cap \cdots \cap \wp_{n_3}^m$ and $I=J\cap \wp_{n+4}^m$. Then $\reg(Z)=\reg(R/I)$. By Lemma \ref{lem21} we get
$$\reg(R/I)=\max\{m-1, \reg(R/J), \reg(R/(J+\wp_{n+4}^m))\}.$$
By Theorem \ref{theorem35} and inductive assumption we get $\reg(R/J)=3m-1$. Choose $P_{n+4}=(1, 0, \ldots, 0)$. Since $\alpha_{n-2}$ is a $(n-1)$-dimensional linear subspace passing through $P_1, \ldots, P_{n+3}$, we have
$$\alpha_{n-2}^mN \in J$$
for every monomial $N$ of degree $i$ in $X_1, \ldots, X_n$; $i=0, \ldots, m-1$.
By a similar argue as above, since $P_{n+4}\notin \alpha_2$, we can write $\alpha_{n-2}=X_0+L$ for some linear form $L\in \wp_{n+4}$. Thus, we have $$(X_0+L)^{2m-i}N \in J.$$ Moreover, since $L\in \wp_{n+4}$ and $N\in \wp_{n+4}^i$, we get
$$X_0^{2m-i}N \in J+\wp_{n+4}^{i+1}$$ for every monomial $N$ of degree $i$ in $X_1, \ldots, X_n$; $i=0, \ldots, m-1$. By Lemma \ref{lem22} we get
$$\reg(R/(J+\wp_{n+4}^{m})) \le 2m-1.$$
Therefore, $$\reg(Z)=3m-1.$$

\end{proof}

% Set the ending of a LaTeX document


\begin{thebibliography}{10}

\bibitem{AH} J. Alexander et A. Hirschowitz, {\it La m\'ethode d'Horace \'eclat\'ee:
application \`a  I'interpolation en degr\'e quatre}, Invent. Math. {\bf 107}
(1992), 585-602.\par

\bibitem{BDP} E.Ballico, O. Dumitrescu and E. Postinghel, {\it On Segre's bound for fat points in $P^n$}, J. Pure Appl. Algebra {\bf 220} (2016), 2307-2323.

\bibitem{BFL} B. Benedetti, G. Fatabbi and A. Lorenzini, {\it Segre's Bound and
the case of $n+2$ fat points of $P^n$}, Comm. Algebra {\bf 40} (2012),
395-403.\par

%\bibitem{CFL} G. Calussi, G. Fatabbi and A. Lorenzini, {\it The regularity index of up to $2n-1$ %equimultiple fat points of $P^n$}, J. Pure Appl. Algebra {\bf 221} (2017), 1423-1437.

\bibitem{CTV} M.V. Catalisano, N.V. Trung and G. Valla, {\it A sharp bound for
the regularity index of fat points in general position}, Proc. Amer. Math. Soc.
{\bf 118} (1993), 717-724.\par

\bibitem{DG} E.D. Davis and A.V. Geramita,\  {\it The \ Hilbert \ function of a
\ special class of 1-dimensional Cohen-Macaulay graded algebras}, The Curves
Seminar at Queen's, Queen's Papers in Pure and Appl. Math. {\bf 67} (1984),
1-29.

%\par \bibitem{DE} D. Eisenbud,\  {\it The Geometry of Syzygies},
%Springer-Verlag New York, (2004).\par
%\bibitem{G1} A. Gimigliano, {\it Regularity of linear systems of plane curves},
%J. Algebra {\bf 124}, (1989), 447-460.\par

%\bibitem{Fa} G. Fatabbi, {\it Regularity index of fat points in the projective
%plane}, J. Algebra {\bf 170} (1994), 916-928.\par

\bibitem{FL} G. Fatabbi, A. Lorenzini {\it On a sharp bound for the regularity
index of any set of fat points}, J. Pure and Appl. Algebra {\bf 161} (2001),
91-111.\par

%\bibitem{Fu} W. Fulton, {\it Algebraic Curves}, Math. Lect. Note Series,
%Benjamin 1969.\par

\bibitem{NT} U. Nagel and B. Trok, {\it Segre's regularity bound for fat point schemes}, Annalli della Scuola Normale Superiore, Vol. {\bf XX} (2020), 217-237.


\bibitem{Seg} B. Segre, {\it Alcune questioni su insiemi finiti di punti in
geometria algebrica}, Atti. Convergno. Intern. di Torino 1961, 15-33.\par



%\bibitem{Th1} P.V. Thien, {\it On Segre bound for the regularity index of fat
%points in $\Bbb P^2$}, Acta Math. Vietnamica {\bf 124} (1999), 75-81.\par

\bibitem{Th2} P.V. Thien, {\it Segre bound for the regularity index of fat
points in $\Bbb P^3$}, J. Pure and Appl. Algebra {\bf 151} (2000), 197-214.\par

%\bibitem{Th3} P.V. Thien, {\it Sharp upper bound for the regularity of
%zero-schemes of double points in $\Bbb P^4$}, Comm. Algebra {\bf 30} (2002),
%5825-5847.\par

\bibitem{Th4} P.V. Thien, {\it Regularity index of $s+2$ fat points not on a
linear $(s-1)$-space}, Comm. Algebra {\bf 40} (2012), 3704-3715.


%\bibitem{Th5} P.V. Thien, {\it Lower bound  for the regularity index of fat points}, Intern. J. of Pure %and Appl. Math. {\bf 109} (2016), 745-755.


\bibitem{TS} P.V. Thien and T.N. Sinh, {\it On the regularity index of s fat points not on a linear 
$(r-1)$-space, $s \le r+3$}, Comm. Algebra, {\bf 45} (2017), 4123-4138.



%\bibitem{Tr} N.V. Trung, {\it An algebraic approach to the regularity index of
%fat points in $\Bbb P^n$}, Kodai Math. J. {\bf 17} (1994), 382-389.\par


%\bibitem{TV} N.V. Trung and G. Valla, {\it Upper bounds for the regularity index
%of fat points with uniform position property}, J. Algebra {\bf 176} (1995),
%182-209.\par 


\end{thebibliography}
\end{document}